\theoremstyle{plain}
\newtheorem{theorem}{Theorem}[section]
\newtheorem{lemma}[theorem]{Lemma}
\theoremstyle{remark}
\newtheorem{definition}[theorem]{Definition}
\newtheorem{remark}[theorem]{Remark}
\newcommand{\sa}[1]{\ensuremath{\,{\buildrel #1 \over \sim}\,}}
\newcommand{\eq}[1]{\ensuremath{\,{\buildrel #1 \over =}\,}}
\newcommand{\err}{\mathrm{ME}}
\newcommand{\adj}{\mathrm{Adj}}
\newcommand{\lap}{\mathrm{Lap}}
\newcommand{\cusum}{\mathrm{CU}}
\newcommand{\cP}{\mathcal{P}}
\renewcommand{\P}{\mathbb{P}}
\newcommand{\E}{\mathbb{E}}
\newcommand{\bern}{\mathrm{Bernoulli}}
\newcommand{\var}{\mathbb Var}
\newcommand{\Bkt}[1]{\left\llbracket #1\right\rrbracket}
\newcommand{\bkt}[1]{\llbracket #1\rrbracket}
\newcommand{\pre}{\preccurlyeq}
\newcommand{\cR}{\mathcal{R}}
\title{\textbf{Concentration inequalities for correlated network-valued processes with applications to community estimation and changepoint analysis}}
\author[1]{Sayak Chatterjee\thanks{Email: \texttt{sayakchatterjee999@gmail.com}}}
\author[2]{Shirshendu Chatterjee\thanks{Email: \texttt{shirshendu@ccny.cuny.edu}. Research supported by PSC-CUNY Enhanced Research Award \#64673-00 52.}}
\author[3, 4]{Soumendu Sundar Mukherjee\thanks{Email: \texttt{soumendu041@gmail.com}. Research supported by an INSPIRE Faculty Fellowship from the Department of Science and Technology, Government of India.}}
\author[1]{Anirban Nath\thanks{Email: \texttt{anirbannath98@gmail.com}}}
\author[5]{Sharmodeep Bhattacharyya\thanks{Email: \texttt{bhattash@science.oregonstate.edu}}}
\affil[1]{Indian Statistical Institute, Kolkata}
\affil[2]{Department of Mathematics, City College and Graduate Center, City University of New York}
\affil[3]{Interdisciplinary Statistical Research Unit, Indian Statistical Institute, Kolkata}
\affil[4]{Department of Mathematics, National University of Singapore}
\affil[5]{Department of Statistics, Oregon State University}
\date{}
        \small\textit{Chatterjee, Chatterjee, Mukherjee, Nath and Bhattacharyya}
        \small\textit{Concentration inequalities for correlated network-valued processes}
\begin{document}

\maketitle

\begin{abstract}
Network-valued time series are currently a common form of network data. However, the study of the aggregate behavior of network sequences generated from network-valued stochastic processes is relatively rare. Most of the existing research focuses on the simple setup where the networks are independent (or conditionally independent) across time, and all edges are updated synchronously at each time step. In this paper, we study the concentration properties of the aggregated adjacency matrix and the corresponding Laplacian matrix associated with network sequences generated from lazy network-valued stochastic processes, where edges update asynchronously, and each edge follows a lazy stochastic process for its updates independent of the other edges. We demonstrate the usefulness of these concentration results in proving consistency of standard estimators in community estimation and changepoint estimation problems. We also conduct a simulation study to demonstrate the effect of the laziness parameter, which controls the extent of temporal correlation, on the accuracy of community and changepoint estimation.
\end{abstract}

\section{Introduction}
\label{sec:intro}
Sequences of networks are now widely available as the main observable or as derived data in many fields of research, including time series of social networks \citep{panisson2013fingerprinting, stopczynski2014measuring, rocha2010information}, epidemiological networks \citep{salathe2010high, rocha2011simulated}, animal networks \citep{gates2015controlling, lahiri2007structure},  mobile and online communication networks \citep{krings2012effects, ferraz2015rsc, jacobs2015assembling}, economic networks \citep{popovic2014extraction, zhang2014dynamic}, brain networks \citep{park2013structural, sporns2013structure}, genetic networks \citep{rigbolt2011system}, and ecological networks \citep{blonder2012temporal}, to name a few. One of the main forms of network sequence data is temporal networks or network-valued time series. Nevertheless, most existing models of network-valued time series assume independent or conditionally independent network layers in the time series. Such models cannot adequately capture the complexity of possible inter-layer dependencies observed in real-world network time series data.

In this paper, we consider network time series data generated from network-valued stochastic processes, where the model allows for direct dependence between present and future network-layers. We provide rigorous concentration results for the aggregated adjacency and Laplacian matrices of such time series data around their population counterparts. Such concentration results have applications in numerous statistical questions, e.g., estimation of the parameters of the underlying process, estimation of latent states in case of an underlying latent space structure, changepoint estimation in case of non-stationarity or structural changes, community detection/estimation in case of the presence of a (stationary) community structure, etc. In this paper, we only focus on the problems of changepoint and community estimation. Exploration of the rest of the applications is left for future work.

We now discuss the main contribution of this paper. Consider a sequence of $T (\ge 2)$ adjacency matrices $(A^{(1)}, \ldots, A^{(T)})$. We posit that the sequence is generated from a lazy stochastic process whose snapshots are inhomogeneous Erd\H{o}s-R\'{e}nyi random graphs.

\begin{definition}[Lazy inhomogeneous Erd\H{o}s-R\'{e}nyi process]\label{def:lirgm}
A sequence of $T \ge 2$ symmetric adjacency matrices $(A^{(1)}, \ldots, A^{(T)})$ on the vertex set $[n] := \{1, \ldots, n\}$ is said to be generated from the \emph{lazy inhomogeneous Erd\H{o}s-R\'{e}nyi process} (abbreviated \emph{lazy IER process}) with mean adjacency matrix $P=((p_{ij})) \in [0, 1]^{n \times n}$ and laziness (or stickiness) parameter $\alpha \in (0, 1)$, if $A_{ij}^{(1)} \sim \bern(p_{ij})$, and, given $A^{(t - 1)}$,
\begin{equation}\label{eq:Agen}
 A_{ij}^{(t)} \,\,
\begin{cases}
    =A_{ij}^{(t-1)} & \text{ with probability } \alpha, \\
    \sim \bern(p_{ij}) & \text{ with probability } 1-\alpha,
\end{cases}    
\end{equation}
for all $t\in\{2, 3, \ldots, T\}$ and $1\le i\le j\le n$. Moreover, the edges $A_{i,j}^{(t - 1)}$ are conditionally independent of each other given $A^{(t - 1)}$.
\end{definition}

We define the aggregated adjacency and Laplacian matrices of the network sequence $(A^{(1)}, \ldots, A^{(t)})$ as follows.
\begin{equation}
\label{eq:aggAL}
A :=\sum_{t\in[T]} A^{(t)} \quad\text{ and } \quad \mathcal L:= I_n-D^{-1/2}AD^{-1/2}.
\end{equation}
Here $A$ denotes the aggregated adjacency matrix and $\mathcal{L}$ is the associated normalized {\it graph Laplacian}, with $D$ being the diagonal matrix of aggregated degrees $d_i :=\sum_{t\in[T], j\in[n]}A^{(t)}_{ij}$, $i = 1, \ldots, n$. The population versions of the aggregated adjacency and Laplacian matrices are defined respectively as
\begin{equation}
\label{eq:EAggAL}
\bar A :=\mathbb E(A) \quad\text{ and } \quad \bar{\mathcal L}:= I_n-\bar D^{-1/2}\bar A\bar D^{-1/2},
\end{equation}
where $\bar D = \E D$ is the diagonal matrix with $\bar d_i :=T\sum_{j\in[n]}p_{ij}$ as its $i$-th diagonal entry. We are now ready to give a sketch of the main theorem of this paper.
\begin{theorem}
\label{thm:main_sketch}
Let $(A^{(t)}, t\in[T])$ be the adjacency matrices of a sequence of $T$ networks on the vertex set $[n]$ that follows the lazy IER process with parameters $P_{n\times n}$ and $\alpha$. Let $d_{\min}:=\min_{i\in[n]}\sum_{j\in[n]}p_{ij}$ (resp.~$d_{\max}:=\max_{i\in[n]}\sum_{j\in[n]}p_{ij}$) denote the minimum (resp.~maximum) among the expected degrees of the vertices for each $t\in[T].$ Then there exist constants $C, C_1(\alpha) > 0$ such that if $Td_{\max}>C(\log(n))^3$, then
\[
    \|A-\bar A\| \le C_1(\alpha)\sqrt{Td_{\max}\log(n)}
\]
with high probability. Moreover, there exist constants $C, C_2(\alpha) > 0$ such that if $Td_{\min}>C(\log(n))^3$, then
\[
    \|\mathcal L-\bar{\mathcal L}\| \le C_2(\alpha)\sqrt{\frac{\log(n)}{Td_{\min}}}
\]
with high probability.
\end{theorem}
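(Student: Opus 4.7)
The plan is to exploit two structural features of the lazy IER process: \textbf{(i)} the edges evolve independently of each other in the $(i,j)$-index, and \textbf{(ii)} along the time axis each edge follows a simple refresh process that admits a Bernstein-type tail. Together these reduce the adjacency bound to a concentration question for a sum of independent, mean-zero, random rank-$\le 2$ matrices, to which I will apply a matrix Bernstein inequality after a carefully chosen truncation. The Laplacian bound will then follow by a standard perturbation argument combined with a scalar concentration of the row sums of $A$.

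Writing $X_{ij} := A_{ij} - T p_{ij}$ and $E_{ij} := e_i e_j^\top + e_j e_i^\top$ for $i<j$ (and $E_{ii} := e_i e_i^\top$), I have
\[
A - \bar A \;=\; \sum_{1\le i\le j\le n} X_{ij}\, E_{ij},
\]
and by (i) the $X_{ij}$'s are jointly independent across edges. The first sub-step is to obtain a sharp edge-wise tail for $X_{ij}$. Using the refresh rule, I would write $A_{ij}^{(t)} = B_{ij}^{(\kappa_{ij}(t))}$, where $(B_{ij}^{(k)})_{k\ge 1}$ are i.i.d.\ $\bern(p_{ij})$ and $\kappa_{ij}(t)$ is the index of the most recent refresh; this gives the block-sum representation $A_{ij} = \sum_k \ell_{ij}^{(k)} B_{ij}^{(k)}$ with block lengths $\ell_{ij}^{(k)}$ that are approximately geometric with mean $1/(1-\alpha)$ and are independent of the $B_{ij}^{(k)}$'s. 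A direct covariance calculation gives $\mathrm{Cov}(A_{ij}^{(s)}, A_{ij}^{(t)}) = \alpha^{|s-t|} p_{ij}(1-p_{ij})$, whence $\var(X_{ij}) \le \tfrac{1+\alpha}{1-\alpha} T p_{ij}$. Conditioning on the block lengths and applying scalar Bernstein to the then-independent summands, together with the standard geometric-maximum bound $\max_k \ell_{ij}^{(k)} \le c(\alpha) \log T$ with high probability, yields a sub-exponential tail
\[
\P(|X_{ij}| \ge s) \;\le\; 2\exp\!\Big(-c(\alpha) \min\Big\{\tfrac{s^2}{T p_{ij}},\,\tfrac{s}{\log T}\Big\}\Big)
\]
uniformly over $(i,j)$.

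I would then apply matrix Bernstein after truncating edge-wise at the scale $M_{ij} \asymp \sqrt{T p_{ij}\log n}$, setting $Y_{ij} := X_{ij}\mathbf 1_{\{|X_{ij}|\le M_{ij}\}} - \E[X_{ij}\mathbf 1_{\{|X_{ij}|\le M_{ij}\}}]$. A union bound combined with the edge-wise tail shows $|X_{ij}|\le M_{ij}$ uniformly in $(i,j)$ with probability $1-o(1)$, provided $T d_{\max}\ge C(\log n)^3$; this is the source of the cubic-log hypothesis, which enters through the ``linear'' part $s/\log T$ of the Bernstein bound together with a union bound over $O(n^2)$ edges. On this high-probability event, the truncation residual vanishes, so it suffices to control the truncated sum $\sum Y_{ij} E_{ij}$. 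For this, the matrix variance parameter satisfies
\[
\Big\|\sum_{i\le j} \E[Y_{ij}^2]\, E_{ij}^2\Big\|_{\op} \;\le\; \tfrac{1+\alpha}{1-\alpha}\, T d_{\max},
\]
since $E_{ij}^2 = e_ie_i^\top + e_je_j^\top$ collects the edge variances into row/column sums bounded by the maximum expected degree. The uniform summand bound $\max_{ij} M_{ij} \asymp \sqrt{T d_{\max}\log n}$ and this variance, fed into matrix Bernstein, yield $\|\sum Y_{ij}E_{ij}\|_{\op}\le C_1(\alpha)\sqrt{T d_{\max}\log n}$ with probability at least $1 - n^{-c}$, completing the first claim.

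For the Laplacian, I would use the standard three-term decomposition
\[
\mathcal L - \bar{\mathcal L} \;=\; \bar D^{-1/2}(\bar A - A) D^{-1/2} + (\bar D^{-1/2} - D^{-1/2})\bar A D^{-1/2} + \bar D^{-1/2} \bar A (\bar D^{-1/2} - D^{-1/2})
\]
combined with: (a) the adjacency bound $\|A-\bar A\|_{\op}\le C_1(\alpha)\sqrt{T d_{\max}\log n}$ just established; and (b) a uniform row-sum concentration $\max_i |D_{ii} - \bar d_i| \le C(\alpha)\sqrt{\bar d_i \log n}$, which follows from the same block-Bernstein argument applied to the independent-in-$j$ sum $D_{ii} = \sum_j A_{ij}$ followed by a union bound over $i$. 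Via the derivative bound $|x^{-1/2} - y^{-1/2}| \le \tfrac{1}{2}\min(x,y)^{-3/2}|x-y|$, (b) upgrades to $\|D^{-1/2} - \bar D^{-1/2}\|_{\op} \lesssim \sqrt{\log n}/(T d_{\min})$, and substituting into the three-term decomposition with $\|\bar A\|_{\op} \le T d_{\max}$ bounds each term by $C_2(\alpha)\sqrt{\log n/(T d_{\min})}$. The main obstacle throughout, I expect, is the edge-wise Bernstein step: propagating the Bernoulli-scale variance $T p_{ij}$ through the random refresh-time partition of $[T]$ without picking up an extra factor of $T$ requires one to combine a conditional Bernstein inequality with geometric-tail concentration of the block lengths, and it is exactly this layer that forces the cubic-log hypothesis.
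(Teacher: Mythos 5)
Your overall architecture matches the paper's: both use the renewal/block representation of the lazy process, compute the edge variance $\var(a_{ij}) \asymp Tp_{ij}/(1-\alpha)$, and then feed truncated, centered edge contributions into a matrix Bernstein inequality, with the Laplacian bound following from degree concentration and a perturbation argument. However, there are two concrete gaps where the execution would fail.

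\textbf{The truncation level is wrong.} You truncate edge-wise at $M_{ij}\asymp\sqrt{Tp_{ij}\log n}$, whereas the paper truncates uniformly at $\sqrt{Td_{\max}/\log n}$ (note: \emph{divided} by $\log n$, and using the row-sum bound $d_{\max}$ rather than the single entry $p_{ij}$). This difference is not cosmetic. First, the claim that $\{|X_{ij}|\le M_{ij}\}$ holds simultaneously over all $O(n^2)$ pairs with high probability is false for small $p_{ij}$: if $Tp_{ij}\log n < 1$ (which is compatible with the hypothesis $Td_{\max}>C(\log n)^3$, since $d_{\max}$ is a row sum), then $M_{ij}<1$, while $\P(a_{ij}\ge 1)\approx Tp_{ij}(1-\alpha)$; summing over pairs, many such edges will be positive, so your union bound does not close. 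The ``linear'' branch $\exp(-c\,s/\log T)$ in your edge-wise tail is far too weak at $s=M_{ij}$ when $p_{ij}$ is small. Second, even if the truncation event held, the uniform bound you plug into matrix Bernstein (you state $\max_{ij}M_{ij}\asymp\sqrt{Td_{\max}\log n}$, though with your definition it would actually be $\sqrt{T p_{\max}\log n}$) gives a linear term $Lt$ that is $\log n$ times larger than the variance proxy $\sigma^2\asymp Td_{\max}/(1-\alpha)$ at the target scale $t\asymp\sqrt{Td_{\max}\log n}$; the exponent then collapses to an $O(1)$ constant rather than $\Theta(\log n)$. The paper's truncation level $\sqrt{Td_{\max}/\log n}$ is the one that makes $Lt$ comparable to $\sigma^2$, and the paper's two-case Chernoff argument in Lemma~\ref{lem:A est}(c) is precisely what is needed to show that this coarser, uniform truncation event is still $1-n^{-3}$ likely under the cube-log hypothesis. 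This is not a detail you can skip; it is where the $(\log n)^3$ condition actually does its work.

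\textbf{The Laplacian perturbation loses a $\sqrt{d_{\max}/d_{\min}}$ factor.} Your first term is bounded as $\|\bar D^{-1/2}\|\,\|A-\bar A\|\,\|D^{-1/2}\|\lesssim (Td_{\min})^{-1}\sqrt{Td_{\max}\log n}=\sqrt{d_{\max}\log n/(Td_{\min}^2)}$, which exceeds the target $\sqrt{\log n/(Td_{\min})}$ by $\sqrt{d_{\max}/d_{\min}}$. The theorem does not assume bounded degree ratio, and its hypothesis for part (B) involves only $Td_{\min}$, so this route does not reproduce the stated bound. The paper instead applies matrix Bernstein \emph{directly} to the rescaled entries $b_{ij}=a_{ij}/\sqrt{\bar\ell_i\bar\ell_j}$ (with a matching rescaled truncation), so that both the variance proxy and the ceiling scale with $\bar\ell_i\bar\ell_j$; the resulting bound then depends only on $d_{\min}$. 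You would need to replace the ``pull out scalars'' step with this weighted matrix Bernstein argument to obtain the claimed rate.
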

Here $\|\cdot\|$ denotes the matrix operator norm. A more precise version of Theorem \ref{thm:main_sketch} is given in Theorem \ref{thm:main} of \textsection \ref{sec:setup}.

\begin{remark}
Both the constants $C_1(\alpha)$ and $C_2(\alpha)$ that appear in Theorem \ref{thm:main_sketch} are increasing functions of $\alpha$. Moreover, $C_1(\alpha), C_2(\alpha)\uparrow \infty$ as $\alpha\uparrow 1$, and $C_1(\alpha), C_2(\alpha)\downarrow C_0$ for an universal constant $C_0 > 0$ as $\alpha\downarrow 0$. In fact, both of these constants are $\Omega(1/\sqrt{1 - \alpha})$.
\end{remark}

\begin{remark}
When $\alpha = 0$, all edges at each layer are generated independently of previous layers. Then the aggregated adjacency and Laplacian matrices concentrates the most compared to other values of $\alpha$. As $\alpha$ increases to 1, the correlation among the edges between each pair of vertices across all layers increases to 1. Consequently, the concentration properties of the aggregated adjacency and Laplacian matrices deteriorate.
\end{remark}

\begin{remark}
The main difficulty in proving the desired concentration inequality for the aggregated adjacency and Laplacian matrices is the complexity of the correlation structure among different layers. Standard methods and known approaches for proving matrix concentration (e.g., matrix Bernstein-type inequalities \citep{oliveira2009concentration, tropp2015introduction}, combinatorial arguments \citep{lei2015consistency, bhattacharyya2020consistent}, path counting arguments used in random matrix theory \citep{Lu_2013}) are not useful for proving nontrivial concentration results for strongly correlated layers of multi-layer network models that we address in Theorem \ref{thm:main_sketch}. Our approach enables us to go beyond weakly correlated multi-layer network models (including multi-layer network models with conditionally independent layers), and develop tools to analyze multi-layer network models involving more complex correlation structures so that computationally efficient and provably consistent algorithms can be developed to address network inference problems for such complex multi-layer networks. 
\end{remark}

To the best of our knowledge, Theorem \ref{thm:main_sketch} is the first such result on aggregated dependent network layers from a network-valued time series.

\subsection{Related works}
\label{sec:related_works}
Most of the probabilistic models for multiple networks appearing in the literature are extensions of random graph models for single networks to the multiple networks setup. Examples of such models include extensions of latent space models \citep{sarkar2005dynamic, sewell2015latent}, mixed membership block models \citep{ho2011evolving, padilla2019change}, random dot-product models \citep{tang2013attribute}, stochastic block models \citep{xu2014dynamic, xu2015stochastic, matias2017statistical, ghasemian2016detectability, corneli2016exact, zhang2017random, pensky2019dynamic, bhattacharyya2020consistent}, and Erd\'{o}s-R\'{e}nyi models \citep{crane2015time}. Some Bayesian models and associated inference procedures have also been proposed in the context of multiple networks \citep{yang2011detecting, durante2014nonparametric}. Most of these models consider network sequences that are either independent (e.g., \cite{xu2015stochastic, bhattacharyya2020consistent}) or conditionally independent (e.g., \cite{matias2017statistical, padilla2019change, pantazis2022importance, athreya2022discovering}). Of these, the work that is the closest to ours is \cite{padilla2019change}. However, the dependence structure in \cite{padilla2019change} is in the form of a lazy process in the latent space of random dot product graphs, so that the network layers, though correlated, are conditionally independent given the latent variables. In contrast, the dependence structure between the network layers in our model stems from lazy Markov chains on individual edges as given in Definition \ref{def:lirgm}. As this structure does not admit a conditionally independent temporal representation, the theoretical study of the proposed model is much more challenging. Indeed, the proofs of the concentration inequalities given in Theorem~\ref{thm:main_sketch} rely on rather delicate decoupling arguments.

\subsection{Our contributions}
\label{sec:contributions}
We propose a novel lazy network-valued stochastic process in this paper as a generating model for temporal network data. The main theoretical contributions of our work are as follows.
\begin{enumerate}
    \item[(a)] The principal contributions of this paper are the concentration results for the aggregated adjacency and Laplacian matrices of network-valued time series generated from the lazy IER process as given in Theorem \ref{thm:main_sketch}. The rates of concentration depend on the amount of dependence across network layers which is captured by the laziness parameter ($\alpha$) of the lazy IER process. 
    
    To the best of our knowledge, the proofs of these concentration results are completely new. They involve tackling sums of dependent random variables and require delicate decoupling arguments. The precise statements of these concentration results are given in \textsection \ref{sec:setup}.
    
    \item[(b)] As an application of the concentration results, we provide consistency results for standard spectral clustering methods to recover community labels from network-valued time series generated from lazy IER processes with community structures. Again, the novelty here is that the theoretical results on consistency are derived for network sequences with dependent network layers (as these are generated from  a lazy stochastic block model (SBM) process with direct dependence between network layers). As expected, the consistency rates depend on the amount of dependence between the network layers as captured by the laziness parameter $\alpha$ of the generating lazy SBM process. The specifics of the model used and corresponding theoretical results are presented in \textsection \ref{sec:community}.
    
    \item[(c)] As another application of the concentration results, we also show consistency of standard cumulative sum (CUSUM) based changepoint estimation procedures for network-valued time series generated from a piecewise lazy IER process. Again, the consistency rates are shown to depend on the amount of dependence between the network layers as captured by the laziness parameter $\alpha$. The details of the piecewise lazy IER process and the consistency results are presented in \textsection \ref{sec:changepoint}.
\end{enumerate}
Empirical results in support of our theoretical results are presented in \textsection \ref{sec:empirical-results}.

\section{Main results}
\label{sec:setup}
In this paper, we suppose that the observed data is an (undirected) \emph{multiplex network} that consists of a finite sequence of $T$ unlabeled graphs $\left(G^{(t)}; t\in[T]\right)$ on the same vertex set $[n] := \{1,  2, \ldots, n\}$. $G^{(t)}$ is referred to as the $t$-th \emph{network layer}, which is represented by the corresponding adjacency matrix $A^{(t)}\in\{0, 1\}^{n\times n}$, where $A^{(t)}_{ij} = 1$ (resp.~$0$) if $i$ and $j$ are (resp.~not) connected by an edge in the $t$-th layer. Thus the observed data consists of $T \ge 2$ adjacency matrices $(A^{(1)}, \ldots, A^{(T)})$. 

As in Definition \ref{def:lirgm}, we say that the sequence of $T\ge 2$ networks $(G^{(1)}, \ldots,  G^{(T)})$ on the vertex set $[n]$ follows the lazy IER process with parameters $P=((p_{ij}))\in[0,1]^{n\times n}$ satisfying $p_{ij}=p_{ji}$ for all $i, j\in[n]$ and $\alpha \in (0,1)$, if the associated adjacency matrices satisfy the generating mechanism of Equation \ref{eq:Agen}. Here, $\alpha$ represents the laziness (or stickiness) parameter, and $P$  represents the mean-adjacency matrix. Let the aggregated adjacency matrix, $A$, and the associated {\it graph Laplacian} matrix, $\mathcal L$, be as defined in Equation \ref{eq:aggAL} and the corresponding population versions, $\bar A$ and $\bar{\mathcal{L}}$ respectively, be as defined in Equation \ref{eq:EAggAL}. Then the main theorem of this paper can be rigorously stated as follows.
\begin{theorem} \label{thm:main}
Let $(A^{(t)}, t\in[T])$ be the adjacency matrices of a sequence of $T$ networks on the vertex set $[n]$ that follows the lazy IER process with parameters $P$ and $\alpha$.  For any constant $c >0$ there exists another constant $C = C(c)>0$, which is independent of $n, T, P$ and $\alpha$, such that the following holds. Let $d_{\min}:=\min_{i\in[n]}\sum_{j\in[n]}p_{ij}$ (resp.~$d_{\max}:=\max_{i\in[n]}\sum_{j\in[n]}p_{ij}$) denote the minimum (resp.~maximum) among the expected degrees of the vertices for each $t\in[T]$.
\begin{enumerate}
    \item[(A)] If $Td_{\max}>C(\log(n))^3$, then there is a constant $C_1(\alpha,c)$ such that for any $\delta\in(n^{-c}, 1/2)$,
    \[
        \mathbb P\bigg(\|A-\bar A\|\le C_1\sqrt{Td_{\max}\log(n/\delta)}\bigg)\ge 1-\delta.
    \]
    \item[(B)] Moreover, if $Td_{\min}>C(\log(n))^3$, then there is a constant $C_2(\alpha)$ such that  for any $\delta\in(n^{-c}, 1/2)$,
    \[
        \mathbb P\bigg(\|\mathcal L-\bar{\mathcal L}\|\le C_2\sqrt{\frac{\log(4n/\delta)}{Td_{\min}}}\bigg)\ge 1-\delta.
    \]
\end{enumerate}
\end{theorem}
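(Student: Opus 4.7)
The plan for part (A) is to decouple the temporal correlation of the lazy chain edge-by-edge so that, after conditioning on the refresh pattern, the upper-triangular entries of $A-\bar A$ become independent, and then to apply a concentration inequality for symmetric random matrices with independent entries. Concretely, for each $(i,j)$ with $i\le j$ and each $t\ge 2$, introduce independent refresh flags $\xi^{(ij)}_t\sim\bern(1-\alpha)$ and refresh values $\eta^{(ij)}_t\sim\bern(p_{ij})$, together with $\eta^{(ij)}_1\sim\bern(p_{ij})$, all mutually independent. Setting $A^{(1)}_{ij}=\eta^{(ij)}_1$ and $A^{(t)}_{ij}=\xi^{(ij)}_t\eta^{(ij)}_t+(1-\xi^{(ij)}_t)A^{(t-1)}_{ij}$ realizes the lazy IER process via $A^{(t)}_{ij}=\eta^{(ij)}_{\tau^{(ij)}_t}$, where $\tau^{(ij)}_t$ is the most recent refresh time at or before $t$. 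Writing $R^{(ij)}_s:=|\{t\in[T]:\tau^{(ij)}_t=s\}|$ for the plateau length starting at refresh time $s$ (so $\sum_s R^{(ij)}_s=T$), one has $A_{ij}=\sum_s R^{(ij)}_s\eta^{(ij)}_s$. Let $\mathcal{G}$ be the $\sigma$-algebra generated by all the $\xi$'s; conditional on $\mathcal{G}$, the upper-triangular entries $X_{ij}:=A_{ij}-\bar A_{ij}=\sum_s R^{(ij)}_s(\eta^{(ij)}_s-p_{ij})$ are independent (different edges involve disjoint sets of $\eta$'s), mean zero, and sub-Gaussian with conditional variance $p_{ij}(1-p_{ij})\sum_s(R^{(ij)}_s)^2$.

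Next, I would restrict to a high-probability event $\mathcal{E}\in\mathcal{G}$ on which (i) $\max_{i,j,s}R^{(ij)}_s\le B:=C(\alpha)\log(nT/\delta)$, by a union bound since each plateau length is stochastically dominated by $\mathrm{Geom}(1-\alpha)$, and (ii) the maximum conditional row-variance $\sigma^2_{\mathrm{row}}:=\max_i\sum_j p_{ij}(1-p_{ij})\sum_s(R^{(ij)}_s)^2$ satisfies $\sigma^2_{\mathrm{row}}\le C(\alpha)Td_{\max}$. For (ii) I would combine $\E[\sum_s(R^{(ij)}_s)^2]=\Theta(T/(1-\alpha))$, a direct calculation using that the plateau lengths are essentially i.i.d.~$\mathrm{Geom}(1-\alpha)$, with a Bernstein-type concentration of this edgewise sum, followed by a second Bernstein estimate for the row-sum across $j$; the hypothesis $Td_{\max}>C(\log n)^3$ is exactly what absorbs the large-deviation corrections at both stages. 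On $\mathcal{E}$, conditional on $\mathcal{G}$, $A-\bar A$ is symmetric with independent, centered, sub-Gaussian upper-triangular entries of row variance at most $\sigma^2_{\mathrm{row}}$ and, after a mild truncation, entrywise magnitude at most $\tilde B=O(\sqrt{BT\log(n/\delta)})$. Applying matrix Bernstein / Bandeira--van Handel then gives
\[
\|A-\bar A\|\le C\sigma_{\mathrm{row}}\sqrt{\log(n/\delta)}+C\tilde B\log(n/\delta)\le C_1(\alpha)\sqrt{Td_{\max}\log(n/\delta)},
\]
the second inequality again using the polylog hypothesis to absorb the lower-order term. A union bound over the three failure events of probability $\le\delta/3$ proves (A).

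For (B), the degrees $d_i=\sum_j A_{ij}$ admit, via the same decoupling applied to row sums, the concentration $|d_i-\bar d_i|\le C(\alpha)\sqrt{Td_i\log(n/\delta)}$ uniformly in $i$, and under $Td_{\min}>C(\log n)^3$ this forces $d_i\asymp\bar d_i$. The bound on $\|\mathcal L-\bar{\mathcal L}\|$ then follows from the telescoping identity
\[
D^{-1/2}AD^{-1/2}-\bar D^{-1/2}\bar A\bar D^{-1/2}=(D^{-1/2}-\bar D^{-1/2})AD^{-1/2}+\bar D^{-1/2}(A-\bar A)D^{-1/2}+\bar D^{-1/2}\bar A(D^{-1/2}-\bar D^{-1/2}),
\]
combined with (A), the degree concentration, and the elementary estimate $|x^{-1/2}-y^{-1/2}|\le|x-y|/(2\min(x,y)^{3/2})$. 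The principal obstacle lies in the row-variance control step (ii): a crude deterministic bound $\sum_s(R^{(ij)}_s)^2\le BT$ would inflate $\sigma^2_{\mathrm{row}}$ by a factor of $\log n/(1-\alpha)$ and introduce a spurious $\sqrt{\log n}$ into the final rate, so one genuinely needs Bernstein-type concentration of $\sum_s(R^{(ij)}_s)^2$ around its mean $\Theta(T/(1-\alpha))$; the polylog hypothesis $Td_{\max}>C(\log n)^3$ is calibrated precisely for this concentration to propagate cleanly through the row-sum over $j$.
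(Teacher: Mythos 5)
Your plan of decoupling the plateau structure edge-by-edge via geometric inter-refresh gaps is essentially the same device the paper uses (it represents $a_{ij}$ as $\sum_k G_{ij}^{(k)}I_{ij}^{(S_{ij}^{(k-1)})}$ plus a boundary term). Where you diverge is that you condition the whole matrix on the refresh pattern $\mathcal G$ before applying matrix concentration; the paper instead exploits that the $a_{ij}$ are already unconditionally independent across edges, truncates each entry, and applies matrix Bernstein directly, doing all the conditioning inside the per-edge Chernoff bound. Both routes are conceptually reasonable, but two of your concrete steps do not close as stated.

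\textbf{(A): the entrywise truncation is too coarse.} You set $\tilde B=O(\sqrt{BT\log(n/\delta)})$, which is the Hoeffding bound from $\sum_s(R_s^{(ij)})^2\le B\sum_s R_s^{(ij)}=BT$; this ignores the conditional variance $p_{ij}(1-p_{ij})\sum_s(R_s^{(ij)})^2\lesssim Tp_{ij}/(1-\alpha)$, which is much smaller in the sparse regime $p_{ij}=O(d_{\max}/n)$. With $\tilde B\approx\sqrt{T}\log(n/\delta)$, the matrix Bernstein second-order term $\tilde B\log(n/\delta)\approx\sqrt{T}\,(\log(n/\delta))^{3/2}$ is dominated by $\sigma_{\mathrm{row}}\sqrt{\log(n/\delta)}\approx\sqrt{Td_{\max}\log(n/\delta)}$ only if $d_{\max}\gtrsim(\log n)^3$, which is strictly stronger than the theorem's hypothesis $Td_{\max}>C(\log n)^3$ (which allows $d_{\max}$ as small as $(\log n)^3/T$). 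The truncation you actually need is at level $\sqrt{Td_{\max}/\log n}$, and obtaining $\mathbb P(|a_{ij}-\bar a_{ij}|>\sqrt{Td_{\max}/\log n})\le n^{-3}$ is precisely the technical heart of the paper's Lemma \ref{lem:A est}(c): a Chernoff argument on the MGF of $\sum_k I_{ij}^{(k)}G_{ij}^{(k)}$ after conditioning on the number of renewals $T_{ij}$, with a nontrivial two-case split according to whether $Td_{\max}$ is large or small relative to $n^2/\log n$. Relatedly, describing the conditional entries as ``sub-Gaussian with conditional variance $p_{ij}(1-p_{ij})\sum_s R_s^2$'' is imprecise: a centered $\mathrm{Bernoulli}(p)$ for small $p$ is sub-exponential, and its optimal sub-Gaussian variance proxy is much larger than $p(1-p)$, so Bandeira--van Handel with the true variance as proxy is not directly applicable here.

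\textbf{(B): the telescoping loses a $\sqrt{d_{\max}/d_{\min}}$ factor.} Plugging part (A) into the middle term via $\|\bar D^{-1/2}(A-\bar A)D^{-1/2}\|\le\|\bar D^{-1/2}\|\,\|A-\bar A\|\,\|D^{-1/2}\|$ gives
\[
\lesssim \frac{1}{\sqrt{Td_{\min}}}\cdot\sqrt{Td_{\max}\log(n/\delta)}\cdot\frac{1}{\sqrt{Td_{\min}}}=\sqrt{\frac{d_{\max}}{d_{\min}}}\cdot\sqrt{\frac{\log(n/\delta)}{Td_{\min}}},
\]
which is weaker than the claimed $\sqrt{\log(4n/\delta)/(Td_{\min})}$ by the condition-number factor $\sqrt{d_{\max}/d_{\min}}$. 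The paper avoids this by running matrix Bernstein directly on the \emph{pre-normalized} entries $b_{ij}=a_{ij}/\sqrt{\bar\ell_i\bar\ell_j}$: the row-variance sum then becomes $\frac{1}{\bar\ell_i}\sum_j\frac{Tp_{ij}}{\bar\ell_j}\lesssim\frac{1}{Td_{\min}}$, because the $\bar\ell_j\ge Td_{\min}$ weight is inside the sum, while $\sum_j Tp_{ij}=\bar\ell_i$ cancels the outer $1/\bar\ell_i$. For the other terms it also bypasses the Lipschitz estimate on $x\mapsto x^{-1/2}$ by writing $I-\mathcal M=\bar DD^{-1}(I-\mathcal L)D^{-1}\bar D$ and using $\|I-\mathcal L\|\le1$, which only requires control of $\|\bar DD^{-1}-I\|=\max_i|\sqrt{\ell_i/\bar\ell_i}-1|$.

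In short: the refresh-pattern decoupling idea and the recognition that the polylog hypothesis is what closes the lower-order terms are both right, but the entrywise truncation must be variance-sensitive (not Hoeffding), and the Laplacian bound must normalize entry-by-entry inside the matrix Bernstein variance rather than post hoc; these are exactly the two places where the paper's proof expends the most effort.
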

The proof of Theorem \ref{thm:main} is given in \textsection \ref{sec:proof}.

\subsection{Two applications}
\label{sec:applications}
We demonstrate the usefulness of our concentration results by deriving consistency guarantees in two important statistical problems in network analysis: (i) community estimation, (ii) changepoint estimation. Both of these problems have been studied in great detail in the setup of independent networks. Although some recent papers study changepoint problems in the context of dependent network-valued time series, to the best of our knowledge there has not been any study of community estimation in the context of dependent dynamic networks. 

\subsubsection{Community estimation}
\label{sec:community}
Suppose there is an underlying community structure in our network-valued time series, where each vertex belongs to one of $K$ communities. The community structure is captured by a community membership matrix $Z \in \{0, 1\}^{n \times K}$, where $Z_{ia} = 1$ if and only if vertex $i$ belongs to community $a \in [K]$. Consider a sequence of networks $G^{(1)}, \ldots, G^{(T)}$ generated from a lazy IER process, with common mean matrix $P = ZBZ^\top$, where $B \in [0, 1]^{K \times K}$ is a matrix whose entries specify the between/within community edge-formation probabilities, i.e. the probability of a link between a vertex from community $a$ and a vertex from community $b$ is given by $B_{ab}$. We call a lazy IER process with such community structure a \emph{lazy SBM process}. Our goal is to recover the community membership matrix $Z$. To that end, we use the spectral clustering algorithm on (a) the aggregated adjacency matrix $A$, and (b) aggregated Laplacian matrix $\mathcal{L}$. Concentration of these matrices around their expectation enables us to recover $Z$ via an application of the Davis-Kahan theorem. Spectral clustering uses $k$-means in its last step. We use the $(1 + \epsilon)$-approximate $k$-means algorithm as in \cite{lei2015consistency}.

The misclustering error of a community estimator $\hat{Z}$ is defined to be
\[
    \err(\hat{Z}, Z) := \inf_{\Pi \in \mathcal{P}_K} \frac{1}{n}\|\hat{Z} - Z\Pi \|_F^2,
\]
where $\cP$ is the set of all $K \times K$ permutation matrices.

We denote by $\hat{Z}_{\adj}$ and $\hat{Z}_{\lap}$ the community estimates obtained by applying $(1 + \epsilon)$-approximate spectral clustering to $A$ and $\mathcal L$, respectively.

\begin{theorem}\label{thm:comm}
Let $d_{\max}$ and $d_{\min}$ be as in Theorem~\ref{thm:main}. Let $\gamma_{\adj}$ denote the smallest non-zero singular value of $P = ZBZ^\top$. For any constant $c >0$ there exists another constant $C = C(c)>0$ such that if $Td_{\max} > C(\log (n))^3$, then there is a constant $C_1 = C_1(\alpha, c)$ such that for any $\delta\in(n^{-c}, 1/2)$,
\[
    \err(\hat{Z}_{\adj}, Z) \le \frac{C_1 (2 + \epsilon) K d_{\max} \log (n / \delta)}{\gamma_{\adj}^2 T}
\]
with probability at least $1 - \delta$.

Also, suppose $\gamma_{\lap}$ denotes the smallest non-zero singular value of $D_P^{-1/2} P D_P^{-1/2}$, where $D_P$ is a diagonal matrix with $i$-th diagonal entry $\sum_{j \in [n]} p_{ij}$. For any constant $c >0$ there exists another constant $C = C(c)>0$ such that if $T d_{\min} > C(\log (n))^3$, then there is a constant $C_2 = C_2(\alpha, c)$ such that for any $\delta\in(n^{-c}, 1/2)$,
\[
    \err(\hat{Z}_{\lap}, Z) \le \frac{C_2 (2 + \epsilon)  K \log(4n/\delta)}{\gamma_{\lap}^2 T d_{\min}}
\]
with probability at least $1 - \delta$.
\end{theorem}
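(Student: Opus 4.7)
The plan is to follow the by-now-standard three-step pipeline for spectral-clustering consistency, with the novelty concentrated in Step 1 where we invoke Theorem \ref{thm:main}. All the remaining ingredients are deterministic and do not depend on the correlation structure of the network sequence.

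\textbf{Step 1 (Concentration).} For the adjacency version, apply part (A) of Theorem \ref{thm:main} to obtain
\[
    \|A-\bar A\|\le C_1(\alpha,c)\sqrt{Td_{\max}\log(n/\delta)}
\]
with probability at least $1-\delta$, under the assumption $Td_{\max}>C(\log n)^3$. For the Laplacian version, apply part (B) analogously.

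\textbf{Step 2 (Davis--Kahan).} Observe that $\bar A=TP=T\,ZBZ^\top$ has rank at most $K$, and its $K$-th largest singular value equals $T\gamma_{\adj}$. Let $\hat U,U\in\mathbb R^{n\times K}$ collect the top-$K$ eigenvectors of $A$ and $\bar A$ respectively. A standard Davis--Kahan $\sin\Theta$ bound (in its Frobenius form) produces an orthogonal $O\in\mathbb R^{K\times K}$ with
\[
    \|\hat U-UO\|_F \;\le\; \frac{C\sqrt{K}\,\|A-\bar A\|}{T\gamma_{\adj}}.
\]
For the Laplacian case the argument is identical after noting that $\bar{\mathcal L}$ (or rather $I_n-\bar{\mathcal L}$) has rank at most $K$ and its $K$-th largest singular value equals $\gamma_{\lap}$, so with $\tilde U,\tilde{\hat U}$ the corresponding eigenvector matrices we get $\|\tilde{\hat U}-\tilde UO'\|_F\le C\sqrt{K}\|\mathcal L-\bar{\mathcal L}\|/\gamma_{\lap}$.

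\textbf{Step 3 (From eigenvector perturbation to misclustering).} Because the population eigenvector matrix factors as $U=Z\,X$ for some $K\times K$ matrix $X$ whose rows are pairwise separated (and analogously for the Laplacian, with a row-normalization step if one follows the spherical-spectral-clustering version), we are in the exact setting of Lemma 5.3 of \cite{lei2015consistency}. Applying that lemma to the output of $(1+\epsilon)$-approximate $k$-means on $\hat U$ yields
\[
    \err(\hat Z_{\adj},Z)\;\le\;\frac{c_0(2+\epsilon)\,K\,\|\hat U-UO\|_F^2}{n}\quad\text{(up to the standard community-size factors),}
\]
and plugging in the bounds from Steps 1 and 2 gives the stated rate. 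The Laplacian case is handled in the same fashion, now using the bound on $\|\tilde{\hat U}-\tilde UO'\|_F$ and dividing by $T d_{\min}$ rather than $Td_{\max}$.

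\textbf{Main obstacle.} Mathematically the only delicate piece is Theorem \ref{thm:main}, which is already assumed. The remaining work is essentially bookkeeping: one has to verify that the Davis--Kahan bound applies with the correct singular-value gap (i.e.\ that the $(K+1)$-st population singular value is zero, so the gap is exactly $T\gamma_{\adj}$ or $\gamma_{\lap}$), and that the $k$-means conversion step does not introduce extra factors that depend on $\alpha$. The constants $C_1(\alpha,c)$ and $C_2(\alpha,c)$ in the final bound are inherited directly from Step 1 and thus blow up like $1/(1-\alpha)$ as $\alpha\uparrow 1$, consistent with the remarks following Theorem \ref{thm:main_sketch}.
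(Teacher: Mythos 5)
Your proposal follows exactly the same route as the paper's proof: concentration from Theorem~\ref{thm:main}, a Davis--Kahan bound on eigenvector perturbation, and the $(1+\epsilon)$-approximate $k$-means conversion via Lemmas 5.1 and 5.3 of \cite{lei2015consistency}, with the observation that $\gamma_{\min}(\bar A)=T\gamma_{\adj}$ and $\gamma_{\min}(I_n-\bar{\mathcal L})=\gamma_{\lap}$. The approach is correct and essentially identical.
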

The constants $C_1$ and $C_2$ in Theorem~\ref{thm:comm} are the same (up to absolute multiplicative constant factors) as their namesakes appearing in Theorem~\ref{thm:main}.

We note here that in case of independent networks, estimators based on spectral clustering on the aggregated adjacency matrix were analysed in \cite{bhattacharyya2020general}. To the best of our knowledge, spectral clustering on the aggregated Laplacian matrix has not been analysed before, even in the context of independent networks. Theorem~\ref{thm:comm} provides such an analysis in a more general dependent set-up.

\subsubsection{Changepoint estimation}
\label{sec:changepoint}
To keep things simple, we consider the setup of a single changepoint, where we observe a sequence of graphs $G^{(1)}, \ldots, G^{(T)}$ such that for some $1 \le \tau \le T - 1$,
\begin{enumerate}
    \item[(a)] $G^{(1)}, \ldots, G^{(\tau)}$ are from a lazy IER process with mean matrix $P$.
    \item[(b)] $G^{(\tau + 1)}, \ldots, G^{(T)}$ are from a lazy IER process with mean matrix $Q$.
    \item[(c)] $A_{ij}^{(\tau + 1)}$ equals $A_{ij}^{(\tau)}$ with probability $\alpha$, and is sampled independently (of $A^{(1:\tau)}$ and across $i, j$) from Bernoulli($Q_{ij}$) with probability $(1 - \alpha)$. 
\end{enumerate}

Notice that under this model,
\[
    \E A^{(t)} = \begin{cases}
        P & \text{if } t \le \tau \\
        Q & \text{if } t > \tau.
    \end{cases}
\]
In other words, there is a change in the link probability matrix at epoch $\tau$.

To estimate the changepoint $\tau$, we use the CUSUM statistic on the sequence of adjacency matrices. Let $\xi \in [0, 1]$. Define
\[
    S_{\xi}(t) := \bigg(\frac{t}{T}\bigg(1 - \frac{t}{T}\bigg)\bigg)^{\xi} \bigg\|\frac{1}{t}\sum_{s = 1}^t A^{(s)} - \frac{1}{T - t}\sum_{s = t + 1}^T A_s \bigg\|,
\]
where recall that $\|\cdot\|$ denotes the matrix operator norm. The CUSUM estimate of $\tau$ then is given by
\begin{equation}\label{eq:cusum_stat}
    \hat{\tau}_{\cusum}^{(\Lambda, \xi)} \in \arg \max_{\Lambda \le t \le T - \Lambda} S_{\delta}(t),
\end{equation}
where $\Lambda$ is known a priori. This statistic \eqref{eq:cusum_stat} has been analysed earlier in the setup of independent networks in \cite{mukherjee2018thesis} and \cite{bhattacharyya2020consistent}.

\begin{theorem}\label{thm:cpe}
Let $\kappa = \min\{\tau, T - \tau\}$. For any $c > 0$, there exists a constant $C(c) > 0$ and a constant $C_1 =C_1(\alpha, c) > 0$ such that for any $\Lambda \le \kappa$,  if $ \Lambda \cdot (\min \{ d_{\max}^P, d_{\max}^Q \}) > C (\log n)^3$, then, for any $0 < c' < c$, 
\[
    |\hat{\tau}_{\cusum}^{(\Lambda, \xi)} - \tau| \le \frac{4 C_1 T}{\big(\frac{\tau(T - \tau)}{T^2}\big)^{\xi}\|P - Q\|} \sqrt{\frac{\max(d^P_{\max}, d^Q_{\max})}{\Lambda} (1 + c')\log n}
\]
with probability at least $1 - 6Tn^{-c'}$.
\end{theorem}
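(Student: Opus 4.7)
The plan is to apply the classical oracle-inequality strategy for CUSUM-type changepoint estimators. Writing $\bar{S}_\xi(t)$ for the ``population'' CUSUM obtained by replacing every $A^{(s)}$ in the definition of $S_\xi(t)$ by $\E A^{(s)}$, the optimality of $\hat{\tau}:=\hat{\tau}_{\cusum}^{(\Lambda,\xi)}$ gives the basic inequality
\[
    \bar{S}_\xi(\tau)-\bar{S}_\xi(\hat{\tau})\;\le\;2\sup_{\Lambda\le t\le T-\Lambda}\bigl|S_\xi(t)-\bar{S}_\xi(t)\bigr|.
\]
The idea is then to lower-bound the left-hand side linearly in $|\hat{\tau}-\tau|$ (the ``margin''), upper-bound the right-hand side via Theorem~\ref{thm:main}(A) (the ``noise''), and finally solve for $|\hat{\tau}-\tau|$.

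For the signal/margin analysis, I would compute $\bar{S}_\xi(t)$ in closed form. The only subtlety is that after the changepoint $\E A^{(s)}_{ij}=\alpha^{s-\tau}P_{ij}+(1-\alpha^{s-\tau})Q_{ij}$ (the lazy chain with target $Q$ starts from a non-stationary state), but the cumulative gap from the constant $Q_{ij}$ is bounded by $\alpha/(1-\alpha)$ per edge, which is absorbed into constants. Modulo this negligible correction,
\[
    \bar{S}_\xi(t)=\Bigl(\tfrac{t}{T}\bigl(1-\tfrac{t}{T}\bigr)\Bigr)^\xi\tfrac{T-\tau}{T-t}\|P-Q\|\ \text{for}\ t\le\tau, \qquad \bar{S}_\xi(t)=\Bigl(\tfrac{t}{T}\bigl(1-\tfrac{t}{T}\bigr)\Bigr)^\xi\tfrac{\tau}{t}\|P-Q\|\ \text{for}\ t>\tau.
\]
A direct analysis of these two pieces shows that $\bar{S}_\xi$ is uniquely maximised at $t=\tau$ with value $(\tau(T-\tau)/T^2)^\xi\|P-Q\|$, and yields a linear margin lower bound of the form $\bar{S}_\xi(\tau)-\bar{S}_\xi(t)\ge c_0(\tau(T-\tau)/T^2)^\xi\|P-Q\|\,|t-\tau|/T$ for a universal $c_0>0$ and all $t\in[\Lambda,T-\Lambda]$.

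For the noise analysis, I would split the centred numerator $\tfrac{1}{t}\sum_{s\le t}(A^{(s)}-\E A^{(s)})-\tfrac{1}{T-t}\sum_{s>t}(A^{(s)}-\E A^{(s)})$ of $S_\xi(t)-\bar{S}_\xi(t)$ at $\tau$ into sub-sums that lie entirely on one side of the changepoint. Each such sub-sum is (up to the bounded bias discussed below) a lazy IER aggregate with constant target, so Theorem~\ref{thm:main}(A) applied with $\delta=n^{-c'}$ bounds its operator norm by $C_1(\alpha,c)\sqrt{(\text{length})\max(d^P_{\max},d^Q_{\max})(1+c')\log n}$. Dividing by $t$ or $T-t$ (each $\ge\Lambda$) and using $\bigl(\tfrac{t}{T}(1-\tfrac{t}{T})\bigr)^\xi\le1$ gives $|S_\xi(t)-\bar{S}_\xi(t)|\le C_1(\alpha,c)\sqrt{\max(d^P_{\max},d^Q_{\max})(1+c')\log n/\Lambda}$ with probability $\ge 1-O(n^{-c'})$ for each fixed $t$; a union bound over the $\le T$ choices of $t\in[\Lambda,T-\Lambda]$ yields the $6Tn^{-c'}$ probability bound in the statement. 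Combining with the margin bound and dividing through by $(\tau(T-\tau)/T^2)^\xi\|P-Q\|/T$ gives the claimed deviation of $|\hat{\tau}-\tau|$.

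The main obstacle is the sub-sum $\sum_{s=\tau+1}^{t}A^{(s)}$ when $t>\tau$: its initial layer $A^{(\tau+1)}$ is correlated with $A^{(\tau)}$, so this sub-sum is not a lazy IER aggregate with stationary initial distribution relative to $Q$, and Theorem~\ref{thm:main}(A) does not apply verbatim. The cleanest fix is a coupling: introduce a fresh lazy IER process $(\tilde A^{(s)})_{s>\tau}$ with mean $Q$, started from $\tilde A^{(\tau+1)}\sim\bern(Q)$ independently of everything before $\tau+1$ and driven by the same refresh coins as $A$. Once an edge's refresh coin lands tails (a geometric-$(1-\alpha)$ event) the two processes couple exactly, so $\|\sum_s(A^{(s)}-\tilde A^{(s)})\|$ contributes only an $O(1/(1-\alpha))$ additive bias that is subdominant to the concentration term. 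Theorem~\ref{thm:main}(A) applied to $\tilde A$ then controls the main fluctuation. Alternatively, one may inspect the proof of Theorem~\ref{thm:main} directly and verify that it uses only the lazy IER transition rule, not stationarity of the first layer, permitting a direct invocation with harmless changes in absolute constants.
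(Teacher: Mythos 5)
Your proposal follows the same template as the paper's proof: compute the population CUSUM $\bar{S}_\xi(t)$, establish a linear margin around $\tau$, control $\sup_t |S_\xi(t) - \bar{S}_\xi(t)|$ by splitting the centred numerator at $\tau$ and invoking Theorem~\ref{thm:main}(A) on each piece, and union bound over $t$. You are, however, more careful than the paper on two points that it glosses over. First, under item (c) of the model in \textsection\ref{sec:changepoint}, the post-change marginals satisfy $\E A_{ij}^{(s)} = \alpha^{s-\tau} P_{ij} + (1-\alpha^{s-\tau}) Q_{ij}$, so the paper's assertion that $\E A^{(t)} = Q$ for $t > \tau$ is not literally true; you correctly identify this and note that the resulting bias in $\bar{S}_\xi$ is bounded by $\alpha/(1-\alpha) \cdot \|P-Q\|/\Lambda$, hence subdominant. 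Second, you flag that Theorem~\ref{thm:main}(A) is stated for a process started from the stationary law, whereas $A^{(\tau+1)}$ is a mixture correlated with $A^{(\tau)}$; the paper applies the theorem to post-$\tau$ sub-sums without comment.

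Your coupling fix for the second issue is the right idea, but its quantitative claim is imprecise as written. Writing $\Delta := \sum_{s>\tau}(A^{(s)} - \tilde A^{(s)})$, each entry $\Delta_{ij}$ is bounded by a Geometric$(1-\alpha)$ coupling time and is nonzero only with probability $\lesssim p_{ij} + q_{ij}$, but $\|\Delta\|$ is an operator norm of a symmetric random matrix, not a per-edge quantity: the correct high-probability order is $\sqrt{\max(d^P_{\max}, d^Q_{\max})\log n}/(1-\alpha)$ (e.g.\ via matrix Bernstein on the independent upper-triangular entries), not $O(1/(1-\alpha))$. This is still subdominant to the main term $C_1(\alpha)\sqrt{\Lambda \max(d^P_{\max},d^Q_{\max})\log n}$ once $\Lambda(1-\alpha)$ is not too small, so your conclusion survives, but the step needs to be fleshed out. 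Your alternative suggestion—re-examining the proof of Lemma~\ref{lem:A est} and noting it only requires the transition rule—would also need a sentence: the renewal decomposition there uses $I_{ij}^{(0)} \sim \bern(p_{ij})$ for the first (pre-renewal) block, so a non-stationary initial layer changes the length-$G_{ij}^{(1)}$ block's contribution, and one must check the variance bound of Lemma~\ref{lem:A est}(b) and the exponential-moment bounds in part (c) survive this change (they do, with altered constants).
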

The constant $C_1$ in Theorem~\ref{thm:cpe} is the same (up to an absolute multiplicative constant factor) as its namesake appearing in Theorem~\ref{thm:main}.

\section{Empirical results} \label{sec:empirical-results}

\subsection{Studying the concentration} \label{sec:concentration_simulation}
We consider two sequences of $T = 30$ networks with $n = 500$ vertices generated from two different lazy IER processes. One of them has mean matrix from a stochastic block model (SBM) and another has a graphon as its mean matrix. For the stochastic block model, there are two communities of equal sizes with intra-cluster link formation probability $a/n$ and inter-cluster link formation probability $b/n$. For the graphon model, we simulated $U_i\sim\mathrm{Unif}(0,1)$ i.i.d. for $i\in[n]$ and put $p_{ij} = W(U_i,\,U_j)$ for $1\le i < j \le n$ where $W(x, y) = \frac{1}{k(1 + \exp(-x - y))}.$ We vary the average degree $d_{\mathrm{avg}}$ of the SBM as $d_{\mathrm{avg}} = 4, 5, 6$, for which we take $(a, b) = (7, 1), (8, 2), (9, 3)$, respectively. For the graphon model, to keep the average degree close to the same values, we take $k = 60, 72, 90$, respectively. The quantities $\|A-\bar{A}\|$ and $\|\mathcal{L}-\bar{\mathcal{L}}\|$ in Theorem \ref{thm:main} are appropriately scaled to $\|A-\bar{A}\|/\sqrt{Td_{\mathrm{avg}}}$ and $\|\mathcal{L}-\bar{\mathcal{L}}\|\sqrt{Td_{\mathrm{avg}}}$, respectively, and are plotted against the stickiness parameter $\alpha$. Figures \ref{fig:conc_sbm} and \ref{fig:conc_graphon} show the plots for the SBM and the graphon model, respectively. The plots confirm that the constants $C_1$ and $C_2$ in Theorem \ref{thm:main} are $\Omega(1/\sqrt{1-\alpha})$ as predicted by our theory.

\begin{figure}[!t]
    \centering
    \begin{tabular}{cc}
    \includegraphics[scale = 0.6]{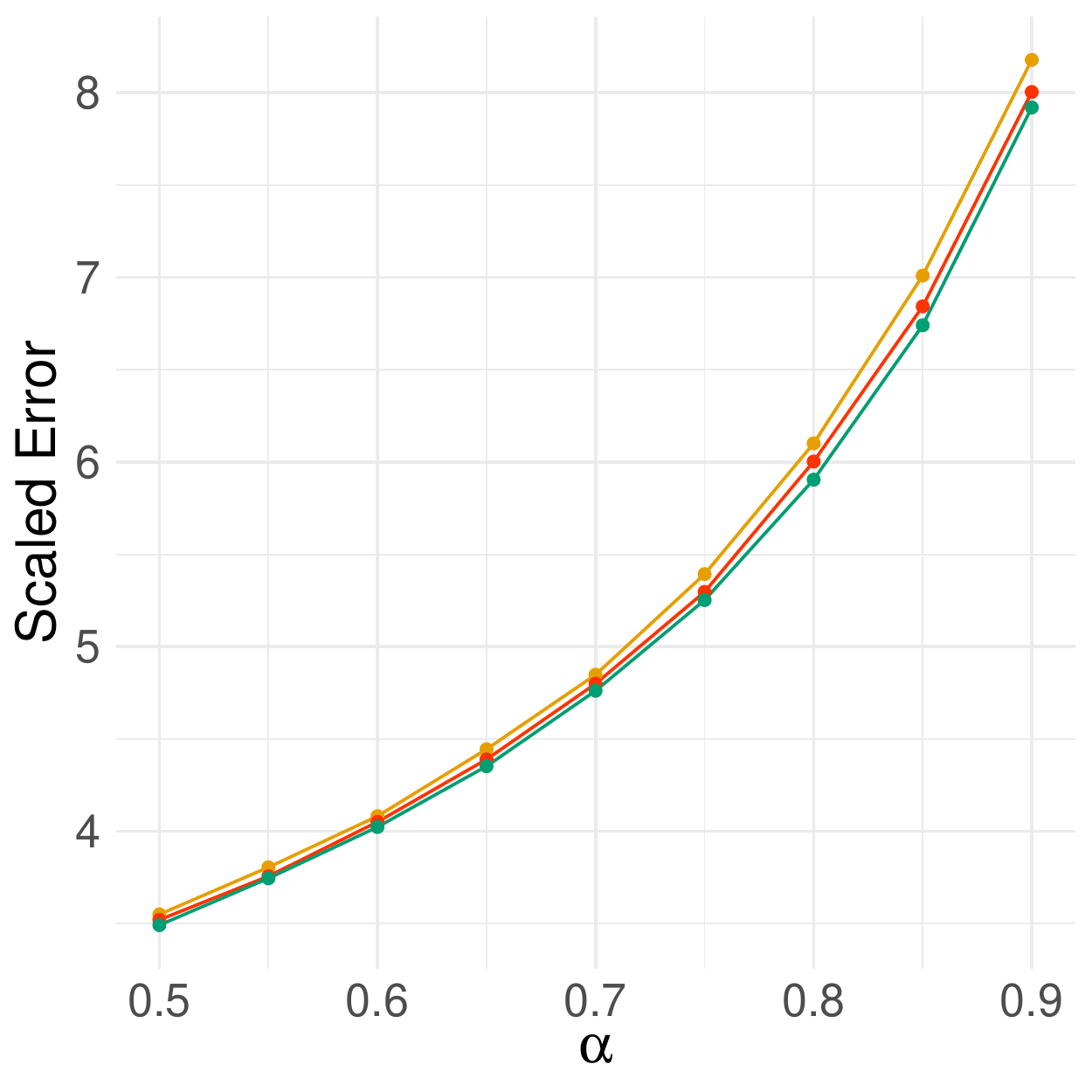} & \includegraphics[scale = 0.6]{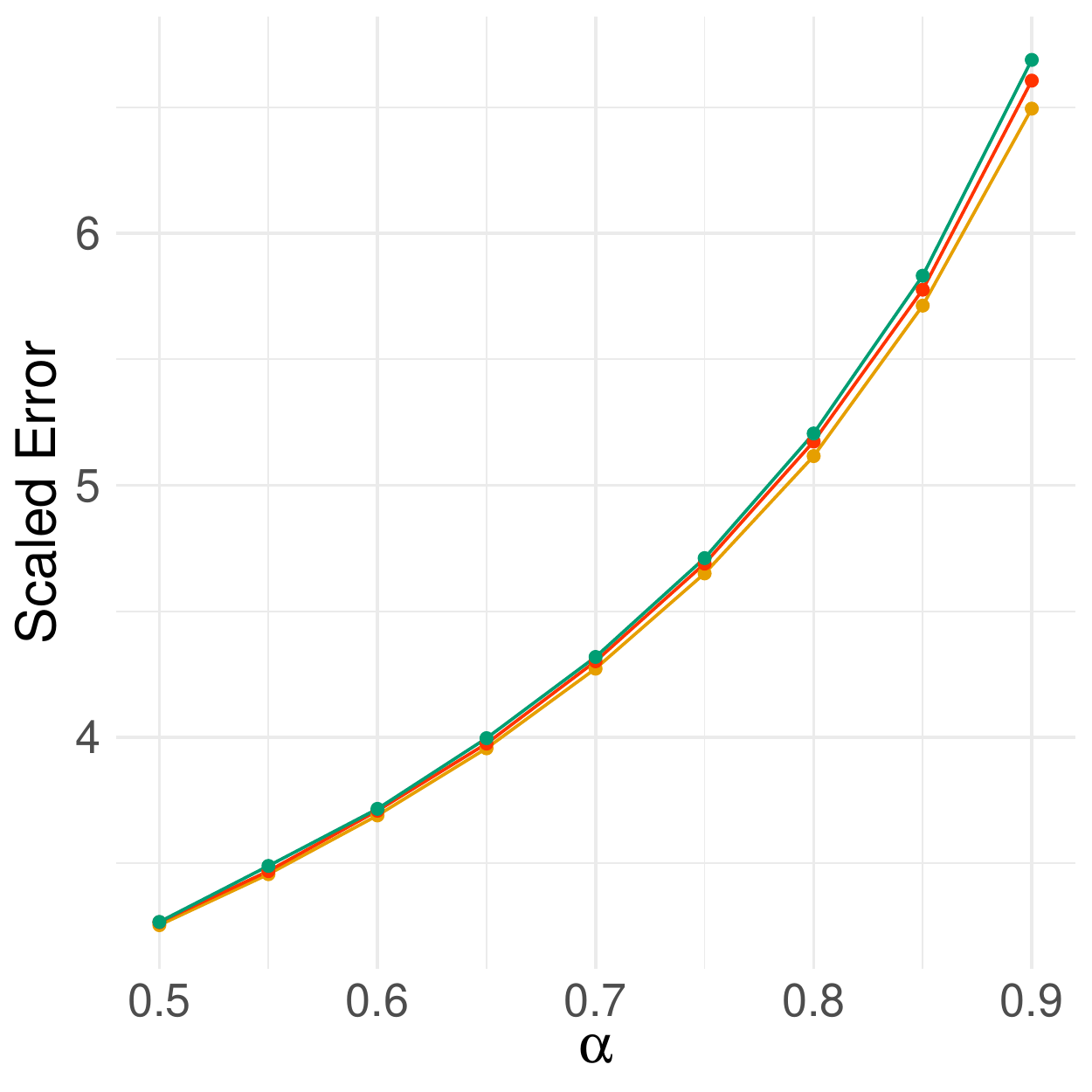}
    \end{tabular}
    \caption{Concentration plots for SBM. Left panel: aggregated adjacency. Right panel: aggregated Laplacian. Parameters: $a=7,\,b=1$ (yellow); $a=8,\,b=2$ (red); $a=9,\,b=3$ (green).}
    \label{fig:conc_sbm}
\end{figure}

\begin{figure}[!ht]
    \centering
    \begin{tabular}{cc}
    \includegraphics[scale = 0.5]{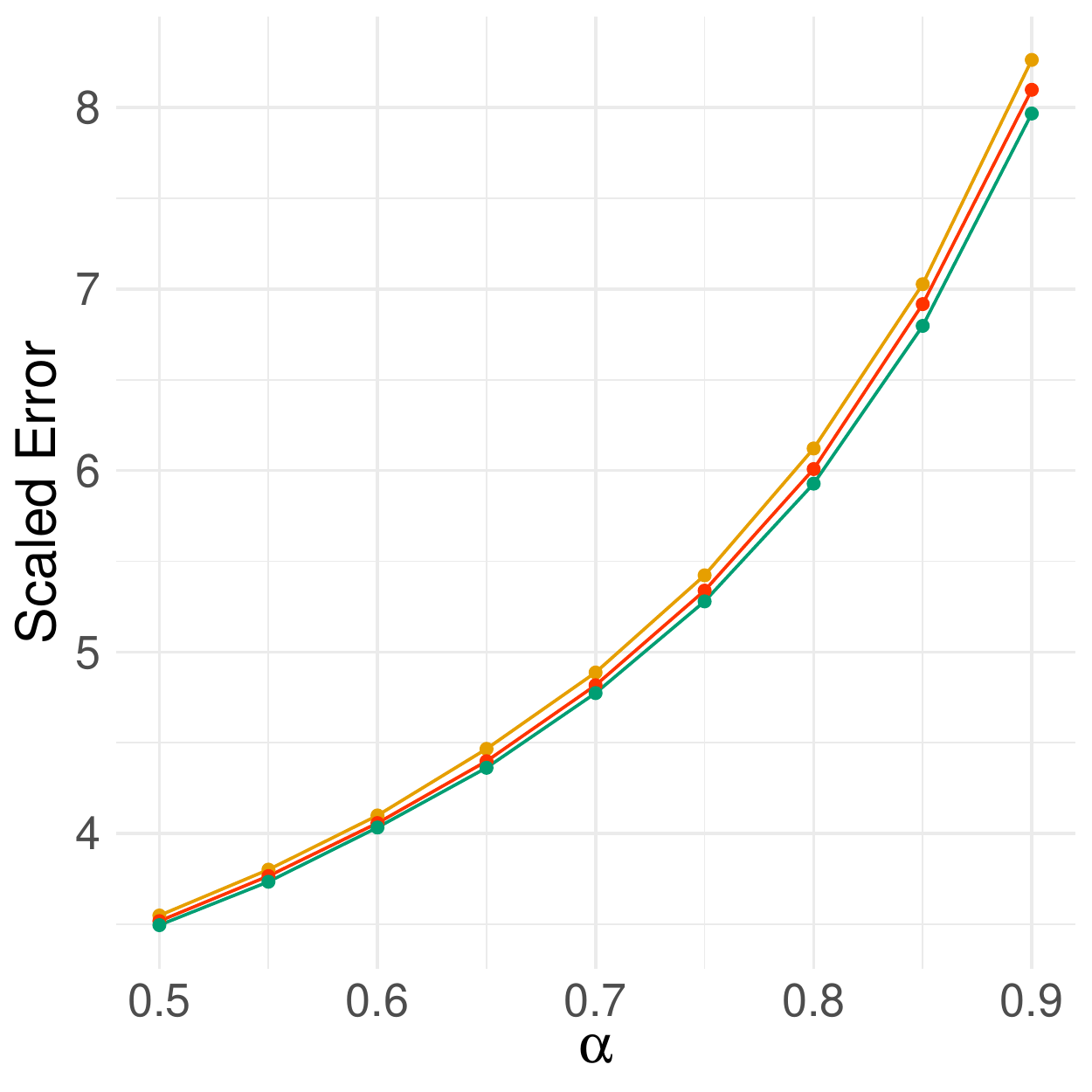} & \includegraphics[scale = 0.5]{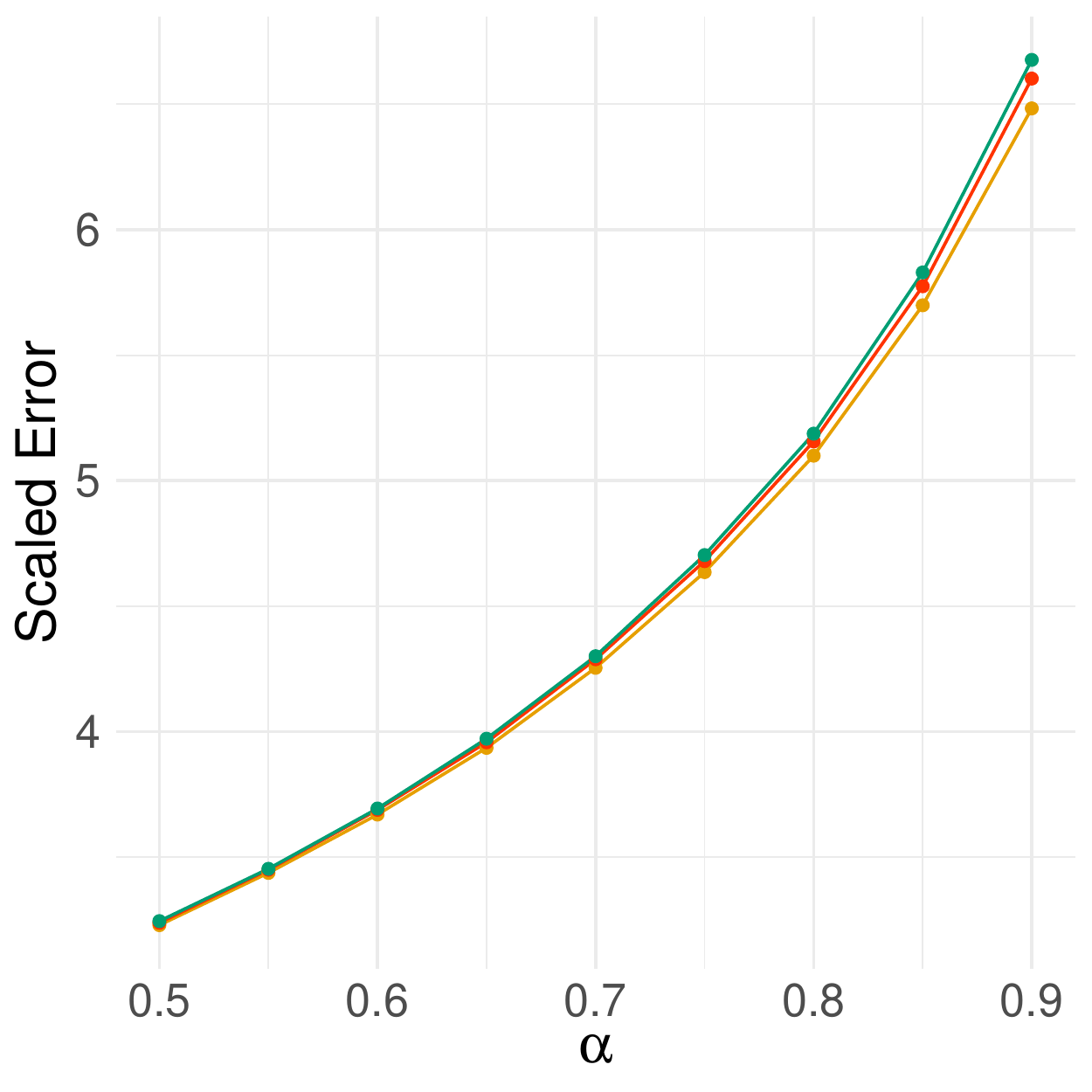}
    \end{tabular}
    \caption{Concentration plots for graphon model. Left panel: aggregated adjacency. Right panel: aggregated Laplacian. Parameters: $k=60$ (green), $72$ (red), 90 (yellow).}
    \label{fig:conc_graphon}
\end{figure}

\subsection{Community estimation} \label{sec:community_simulation}
We consider sequences of $T = 30$ networks generated from a lazy SBM process on $n = 500$ vertices with two communities, intra-cluster link formation probability $a/n$ and inter-cluster link formation probability $b/n$. The two communities have equal sizes. The signal to noise ratio in this model is given by $\Gamma = (a - b)^2/(2(a + b))$. In Figure \ref{fig:comm_est}, we plotted the misclustering error as a function of the stickiness parameter $\alpha$, in three different settings of $a$ and $b$ (and hence $\Gamma$). The error, as expected, increases with $\alpha$ due to lack of concentration. Also, the misclustering error is smaller for larger $\Gamma$. 

\begin{figure}[!ht]
    \centering
    \begin{tabular}{cc}
    \includegraphics[scale = 0.6]{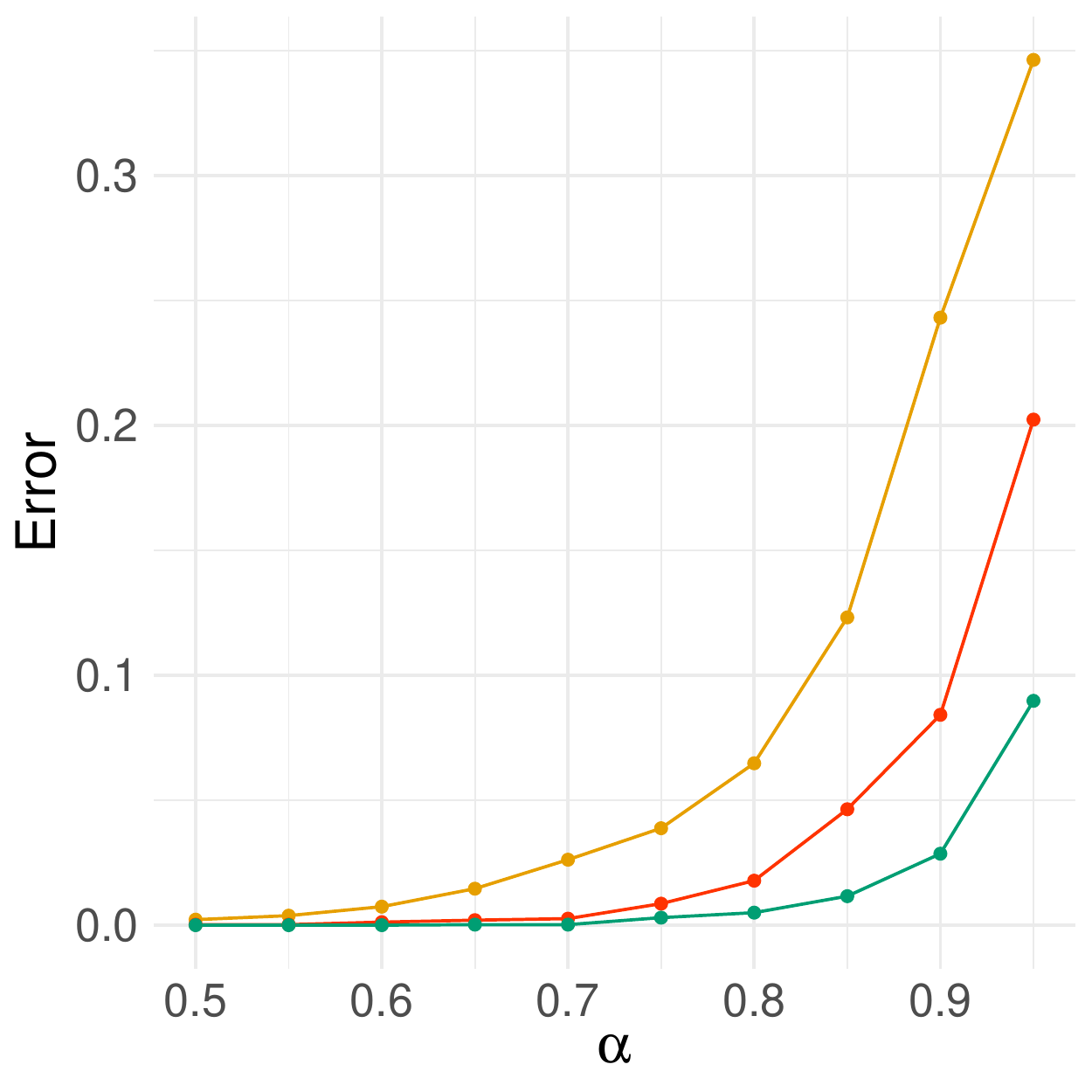} & \includegraphics[scale = 0.6]{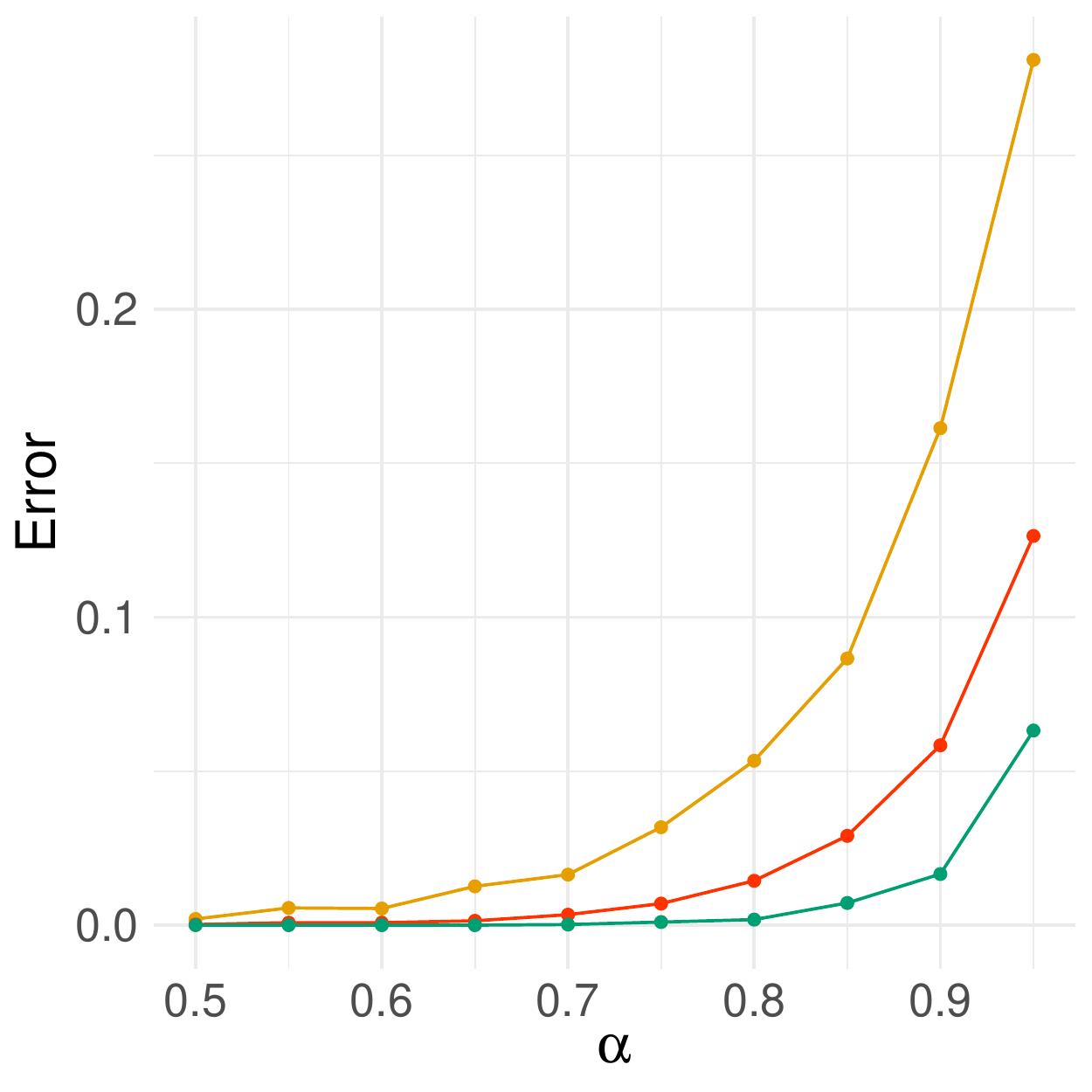}
    \end{tabular}
    \caption{Average misclustering error in community estimation for SBM over 100 experiments. Left panel: using the aggregated adjacency matrix. Right panel: using the aggregated Laplacian matrix. Parameters: $a=7,\,b=3$ (yellow); $a=7.5,\,b=2.5$ (red); $a=8,\,b=2$ (green).}
    \label{fig:comm_est}
\end{figure}

\subsection{Changepoint estimation} \label{sec:changepoint_simulation}
For illustrating changepoint estimation, we simulate networks from lazy IER process with mean matrices as two different graphons. We generate $T = 30$ networks with $n = 500$ vertices with the changepoint at $\tau = 15$. For the first $15$ networks, we start with $U_i\sim\mathrm{Unif}(0,1)$ i.i.d. for $i\in[n]$ and put $p_{ij} = W(U_i, U_j)$ for $1 \le i < j \le n$ where $W(x, y) = \frac{1}{72(1 + \exp(-x - y))}$ to keep the average degree close to $5$ and simulate the networks for a fixed stickiness parameter $\alpha$. As for the mean matrix of the post-change networks, we randomly select $\Delta_e$ pairs of vertices and increase the corresponding edge probabilities of the pre-change mean matrix by $\Delta_p$. We generate the rest of the process using this new mean matrix. We vary $\Delta_e \in \{100, 110, 120\}$ and $\Delta_p \in \{0.05, 0.07\}$. We use the CUSUM statistic with $\xi = 1/2$ for estimating the changepoint. In Figure \ref{fig:change_pt_est}, we plot the average absolute error, i.e. the average absolute difference between the true and the estimated changepoints, as a function of the stickiness parameter $\alpha$. As expected, the average absolute error increases with the stickiness parameter $\alpha$ due to lack of concentration.

\begin{figure}[!ht]
    \centering
    \begin{tabular}{cc}
    \includegraphics[scale = 0.6]{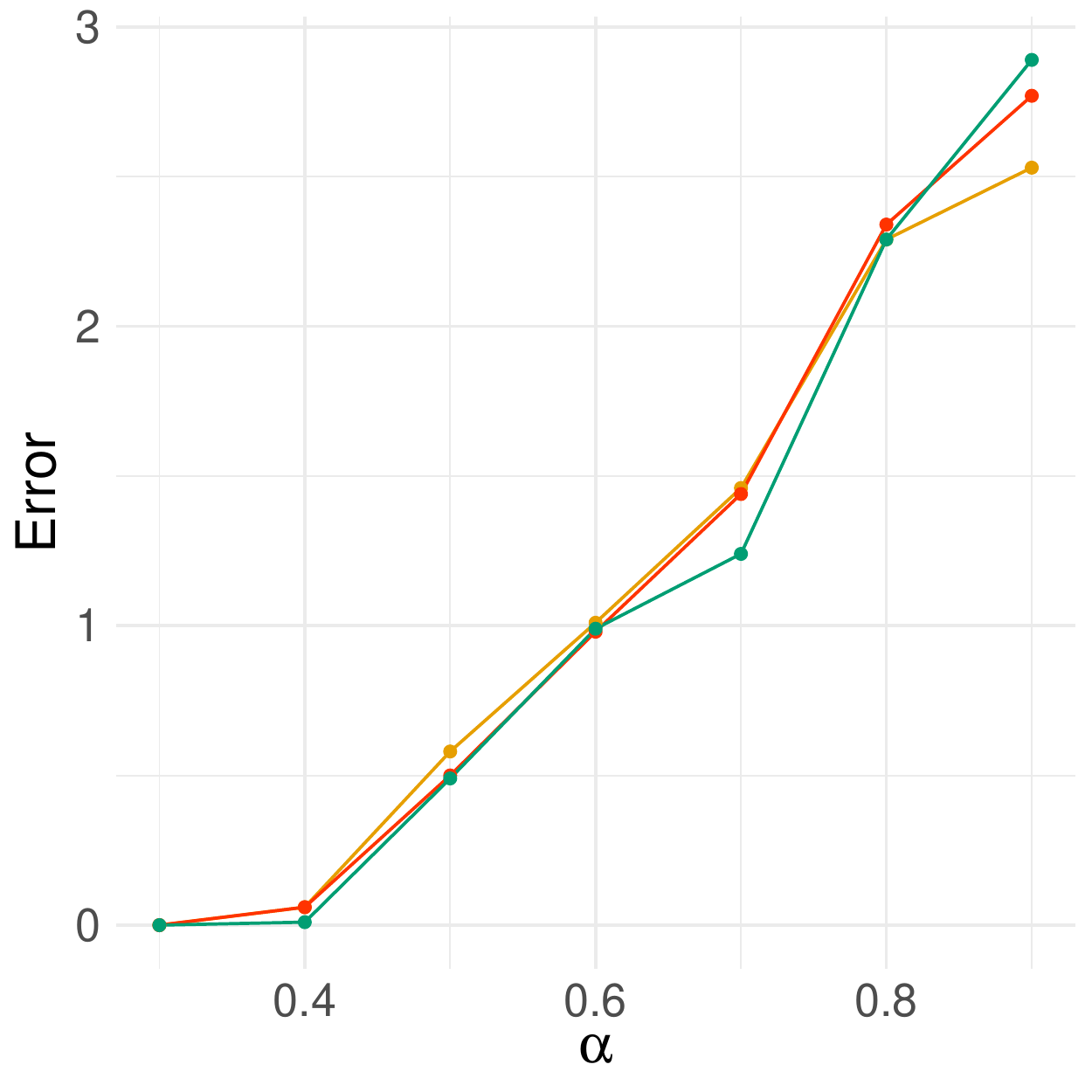} & \includegraphics[scale = 0.6]{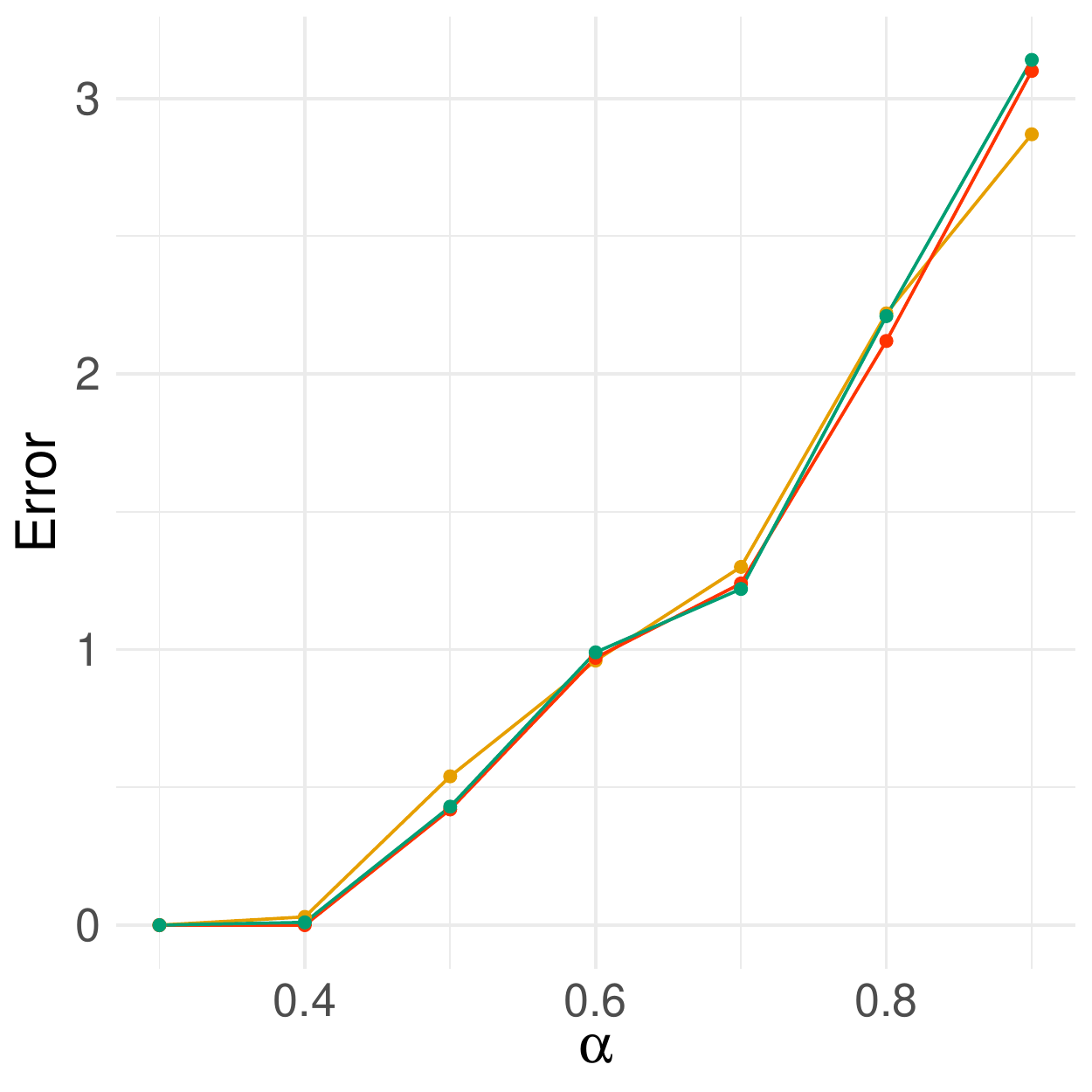}
    \end{tabular}
    \caption{Average absolute error in changepoint estimation for graphon model over 100 experiments. Left panel: $\Delta_p=0.05.$ Right panel: $\Delta_p=0.07.$ Parameters: $\Delta_e=100$ (yellow), 110 (red), 120 (green). We have used the CUSUM statistic with $\xi = 1/2$.}
    \label{fig:change_pt_est}
\end{figure}

\section{Proofs}
\label{sec:proof}
For any random object $X$, we will use the notation $\bkt{X}$ to denote $X-\E(X)$. Also, for matrices $A$ and $B$, we write $A\pre B$ to mean that $B-A$ is non-negative definite.  We would need the following lemma whose proof is provided at the end of this section.
\begin{lemma} \label{lem:A est} For the notation defined in \textsection\ref{sec:setup}, if we denote $A=((a_{ij}))_{i,j\in[n]}$ and $\bar{a}_{ij}=\E(a_{ij}),$ then the following holds.
\begin{enumerate}
    \item[(a)] $\bar a_{ij}=(T+1)p_{ij}$ for all $i, j\in[n]$.
    \item[(b)] For any $\alpha\in(0,1)$, if $\sqrt T>\log_{1/\alpha}(T)$, then 
    \[
        \var(a_{ij}) \le Tp_{ij}\frac{3-\alpha}{1-\alpha} \text{ for all } i, j\in[n].
    \]
    \item[(c)] Let $$F_{ij} := \bigg\{|a_{ij} - \bar a_{ij}| \le \sqrt{\frac{Td_{\max}}{\log(n)}}\bigg\}.$$ Then $\mathbb P(F_{ij}^c) \le 1/n^3$ for all $i, j\in[n]$.
    \item[(d)] If 
    \[
        \bar\ell_i:=\sum_{j\in[n]} \bar a_{ij} \text{ for } i\in[n], \text{ and } \widetilde F_{ij} := \bigg\{|a_{ij} - \bar a_{ij}| \le \sqrt{\frac{\bar\ell_i \bar\ell_j}{Td_{\min}\log(n)}}\bigg\},
    \]
    then $\mathbb P(\widetilde F_{ij}^c) \le 1/n^3$ for all $i, j\in[n]$.
    \item[(e)] If $$\ell_i:=\sum\limits_{j\in[n]} a_{ij}\text{ for }i\in[n],$$ then there is a constant $\kappa$ such that $$\mathbb P\bigg(\bigg|\sqrt{\frac{\ell_i}{\bar\ell_i}} -1\bigg|>\kappa\sqrt{\frac{\log(n/\delta)}{Td_{\min}}}\bigg)\le\frac{\delta}{2n}$$ for each $i\in[n]$.
\end{enumerate}
\end{lemma}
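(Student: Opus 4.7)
The plan is to handle the five parts in order, exploiting two structural facts: each individual edge process $(A^{(t)}_{ij})_{t \in [T]}$ is a two-state Markov chain with stationary distribution $\bern(p_{ij})$, and the distinct edge processes are mutually independent. Part (a) follows because the lazy transition in Definition \ref{def:lirgm} preserves the $\bern(p_{ij})$ marginal: $A^{(t)}_{ij}$ is a mixture of $A^{(t-1)}_{ij}$ and a fresh $\bern(p_{ij})$ sample, so every $A^{(t)}_{ij}$ has mean $p_{ij}$. Summing over $t$ gives $\bar a_{ij}$ proportional to $p_{ij}$, with the $T+1$ versus $T$ discrepancy in the statement reflecting an indexing convention that includes an initial layer.

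For part (b), I would compute $\mathrm{Cov}(A^{(t)}_{ij}, A^{(t+k)}_{ij}) = \alpha^k p_{ij}(1-p_{ij})$ by a one-step induction on the lazy rule and then sum:
\[
\var(a_{ij}) = p_{ij}(1-p_{ij}) \Bigl(T + 2\sum_{k=1}^{T-1}(T-k)\alpha^k\Bigr) \le Tp_{ij}\cdot\frac{1+\alpha}{1-\alpha}.
\]
The hypothesis $\sqrt{T} > \log_{1/\alpha}(T)$, equivalently $\alpha^{\sqrt T} \le 1/T$, is used to bound the boundary term $\sum k\alpha^k$ so that it is absorbed into a small additive slack, producing the stated constant $(3-\alpha)/(1-\alpha)$.

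For parts (c) and (d), I would invoke a Bernstein-type inequality on the Markov-chain sum $a_{ij}$. The cleanest route is a regeneration decomposition: writing $A^{(t)}_{ij} = B^{(t)} A^{(t-1)}_{ij} + (1-B^{(t)}) C^{(t)}$ with iid $B^{(t)}\sim\bern(\alpha)$ and iid $C^{(t)} \sim \bern(p_{ij})$, the chain consists of iid blocks between refresh times (epochs when $B^{(t)}=0$), each of geometric length with mean $1/(1-\alpha)$ and within which $A^{(t)}_{ij}$ is constant. Classical Bernstein for the resulting iid sum, with variance proxy $\sigma^2 = O(Tp_{ij}/(1-\alpha))$ and block lengths having exponential tails, yields a subgaussian tail of the form $\P(|a_{ij} - \bar a_{ij}| > r) \le 2\exp(-cr^2(1-\alpha)/(Tp_{ij}))$. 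Plugging in the radii of (c) and (d), and using $d_{\max} \ge p_{ij}$ respectively $\bar\ell_i\bar\ell_j \ge T^2 d_{\min}^2$ together with the regime $Td_{\max}, Td_{\min} \gg (\log n)^3$ of the main theorem, gives the required $n^{-3}$ tail.

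For part (e), $\ell_i = \sum_{j \in [n]} a_{ij}$ is a sum that is independent across $j$; summing the variance bound from (b) yields $\var(\ell_i) \le (3-\alpha)\bar \ell_i/(1-\alpha)$. An analogous Bernstein argument for this sum of independent Markov-chain sums gives $|\ell_i - \bar \ell_i| \le C_\alpha \sqrt{\bar \ell_i \log(n/\delta)}$ with probability at least $1 - \delta/(2n)$, which is at most $C'_\alpha \bar \ell_i \sqrt{\log(n/\delta)/(Td_{\min})}$ since $\bar\ell_i \ge Td_{\min}$. The elementary Lipschitz inequality $|\sqrt{x} - 1| \le |x-1|$ for $x \ge 0$, applied to $x = \ell_i/\bar\ell_i$, transfers this to the claimed bound on $|\sqrt{\ell_i/\bar\ell_i} - 1|$. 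The main obstacle is (c) and (d): obtaining a sharp Bernstein-type tail for the chain sum $a_{ij}$ with clean tracking of the $1/(1-\alpha)$ blow-up, since the chain mixes more slowly as $\alpha \uparrow 1$. The regeneration-block route requires a careful truncation of the terminal partial block and verification that the hypotheses $\sqrt T > \log_{1/\alpha} T$ in (b) and $Td_{\max} > C(\log n)^3$ in the main theorem jointly suffice to make the correlation decay effective.
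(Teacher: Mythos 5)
Your overall strategy — regeneration decomposition into geometric blocks plus a Bernstein-type tail, then union bound — is the same route the paper takes, and your direct covariance calculation for part (b) is actually cleaner than the paper's: summing $\mathrm{Cov}(A^{(s)}_{ij},A^{(t)}_{ij})=\alpha^{|s-t|}p_{ij}(1-p_{ij})$ gives $\var(a_{ij})\le Tp_{ij}\frac{1+\alpha}{1-\alpha}\le Tp_{ij}\frac{3-\alpha}{1-\alpha}$ unconditionally, so the hypothesis $\sqrt T>\log_{1/\alpha}T$ is not needed for your version (the paper needs something like it only because it routes through Wald's equation and must control a terminal partial block). You misattribute the role of that hypothesis, but your bound is correct.

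The real gap is in parts (c) and (d). Your claimed tail $\P(|a_{ij}-\bar a_{ij}|>r)\le 2\exp(-cr^2(1-\alpha)/(Tp_{ij}))$ is subgaussian, but the block increments $G^{(k)}_{ij}I^{(k)}_{ij}$ are geometric-sized and hence only subexponential, with subexponential scale $\sim(1-\alpha)^{-1}$. A Bernstein inequality for such a sum gives a tail of the form
\[
\exp\Bigl(-c\min\Bigl\{\tfrac{r^2(1-\alpha)}{Tp_{ij}},\,r(1-\alpha)\Bigr\}\Bigr),
\]
and for the deviation $r=\sqrt{Td_{\max}/\log n}$ with small $p_{ij}$ one is firmly in the linear (subexponential) regime, where the relevant exponent is $(1-\alpha)\sqrt{Td_{\max}/\log n}$. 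That is exactly why the hypothesis $Td_{\max}>C(\log n)^3$ appears in the statement; it has no role in your subgaussian account. This is also why the paper's proof splits into a dense case ($Td_{\max}\gtrsim n^2/\log n$, where a more refined MGF estimate is used) and a sparse case (where the lower tail is vacuous since $a_{ij}\ge 0$ and a fixed-$\lambda$ Chernoff argument handles the upper tail). Your sketch would need to (i) replace the subgaussian claim with the correct subexponential Bernstein bound and track the crossover, and (ii) actually control the randomness of the number of renewals $T_{ij}$ — your ``truncate the terminal block'' remark is a placeholder for the paper's events $H_{ij}$, $\widetilde H_{ij}$ restricting $T_{ij}$ to a window around $T(1-\alpha)$, without which the block decomposition has a random number of iid summands and the Bernstein bound does not directly apply.

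For part (e), your outline (independence across $j$, variance bound, Lipschitz transfer $|\sqrt x -1|\le|x-1|$) is the right skeleton, but the Bernstein step inherits the same subexponential-versus-subgaussian issue, and since you also need to track the fluctuation of the total renewal weight $\sum_j T_{ij}p_{ij}$ (the paper's events $J_i^\pm$, $K_{ij}^\pm$), the argument as sketched is not yet a proof.
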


\begin{proof}[Proof of Theorem \ref{thm:main} (A)]
For $1\le i\le j\le n$, let $\mathbf{e}_{i}$ denote the $i$-th canonical basis vector of $\mathbb{R}^{n}.$ Define
$$E_{ij}:=\begin{cases}\mathbf{e}_{i} \mathbf{e}_{j}^{\top}+\mathbf{e}_{j} \mathbf{e}_{i}^{\top}&\text{if }i\ne j\\\mathbf{e}_{i} \mathbf{e}_{i}^{\top}&\text{if }i=j\end{cases}$$
and $X_{ij}:=\bkt{a_{ij}}\mathbf 1_{F_{ij}}E_{ij}$ where $F_{ij}$ is the same as in Lemma \ref{lem:A est}(c).
Clearly
$$\bkt{A}=\sum_{1\le i\le j\le n}\bkt{a_{ij}}E_{ij}\implies\bkt{A}=\sum_{1\le i\le j\le n} X_{ij}\text{ on the event }F:=\bigcap_{1\le i\le j\le n}F_{ij}.$$
Also, by the triangle inequality,
$$\bigg\|\sum_{1\le i\le j\le n} X_{ij}\bigg\| \le \bigg\|\sum_{1\le i\le j\le n} \bkt{X_{ij}}\bigg\|+\bigg\|\sum_{1\le i\le j\le n} \E (X_{ij})\bigg\|=:A_1+A_2.$$
To bound $A_2$, we use the fact that $\|\mathbf U\| \le \max_{i\in[n]}\sum_{j\in[n]}|u_{ij}|$ for any symmetric $n\times n$ matrix $\mathbf U$. Since $\E\bkt{a_{ij}}=0$, we have $\E(\bkt{a_{ij}}\mathbf 1_{F_{ij}})=\E(\bkt{a_{ij}}\mathbf 1_{F_{ij}^c}).$ Now can use Cauchy-Schwartz inequality and the bound for $\mathbb P(F_{ij}^c)$ given in Lemma \ref{lem:A est}(c) to have
\begin{align}
A_2 \le \max_{i\in[n]}\sum_{j\in[n]} |\E(\bkt{a_{ij}}\mathbf 1_{F_{ij}^c})|
&\le \max_{i\in[n]}\sum_{j\in[n]}\sqrt{\E\bkt{a_{ij}}^2\mathbb P(F_{ij}^c)} \notag \\
&\le \max_{i\in[n]}\sum_{j\in[n]}\frac{\sqrt{Tp_{ij}}}{n} \le \sqrt{\frac{Td_{\max}}{n}}. \label{A2 bd}
\end{align}
To bound $A_1$, we will use the standard matrix Bernstein inequality (see, e.g., Corollary 7.1 of \cite{oliveira2009concentration}). Since 
$\|E_{ij}\|=1$, so 
\begin{align}
\left\|\bkt{X_{ij}}\right\|=\left|\Bkt{\bkt{a_{ij}}\mathbf 1_{F_{ij}}} \right| = \left|\Bkt{\bkt{a_{ij}}\mathbf 1(|\bkt{a_{ij}}|\le(Td_{\max}/\log(n))^{1/2})} \right| \le 2\sqrt{\frac{Td_{\max}}{\log(n)}}. \label{X norm bd}
 \end{align}
Using Lemma \ref{lem:A est}(b) and noting that $E_{ij}^2=\mathbf e_i\mathbf e_i^\top+\mathbf e_j\mathbf e_j^\top$ (respectively $\mathbf e_i\mathbf e_i^\top$) for $i<j$ (respectively $i=j$), 
\begin{align*}
\sum_{1\leq i\leq j\leq n}\mathbb{E}\Bkt{X_{ij}}^2 &\preccurlyeq \sum_{1\leq i\leq j\leq n}\E\bkt{a_{ij}}^2 E_{ij}^2 \\
&\preccurlyeq\frac{(3-\alpha)}{1-\alpha}\sum_{1\le i<j\le n}Tp_{ij}\left(\mathbf{e}_{i} \mathbf{e}_{i}^{\top}+\mathbf{e}_{j} \mathbf{e}_{j}^{\top}\right) 
\preccurlyeq \frac{Td_{\max}(3-\alpha)}{(1-\alpha)}\mathbf I,
\end{align*}
which shows $\bigg\|\sum\limits_{1\leq i\leq j\leq n}\mathbb{E}\bkt{X_{ij}}^2\bigg\|\le Td_{\max}(3-\alpha)/(1-\alpha)$. Combining this with \eqref{X norm bd}, and applying the matrix Bernstein inequality, taking
$$t=C_1\sqrt{Td_{\max}\log(n/\delta)},\text{ where }\delta\in(n^{-c},\,1/2)$$
and $C_1=C_1(\alpha, c)$ satisfies $C_1^2/2\geq8(3-\alpha)/(1-\alpha)+4C_1\sqrt{c+1}$, we see that
\begin{align}
   \mathbb{P}\bigg(\bigg\|\sum_{1\leq i\leq j\leq n}\bkt{X_{ij}}\bigg\|\geq t\bigg) 
  & \leq 2n\exp{\left(-\frac{t^2}{\frac{8Td_{\max}(3-\alpha)}{(1-\alpha)}+4t\sqrt{\frac{Td_{\max}}{\log(n)}}}\right)} \nonumber\\
  & \leq 2n\exp\left(-\frac{ C_1^2Td_{\max}\log(n/\delta)}{\frac{8Td_{\max}(3-\alpha)}{(1-\alpha)}+4C_1\sqrt{c+1}Td_{\max}}\right) \le \delta. \label{1st_Bern}
\end{align}
Combining \eqref{1st_Bern} with \eqref{A2 bd},
$$\mathbb P\bigg(\|\bkt{A}\| > 2C_1\sqrt{Td_{\max}\log(n/\delta)}; F\bigg)\le\delta.$$
Using Lemma \ref{lem:A est}(c) and union bound, we get $\mathbb P(F^c)\sim1/n.$ Taking $\kappa_1=2C_1$ completes the proof of (A).
\end{proof}

\begin{proof}[Proof of Theorem \ref{thm:main} (B)]
Let $D$ (resp.~$\bar D$) be the diagonal matrix having $\ell_i^{-1/2}$ (resp.~$\bar\ell_i^{-1/2}$) at the $i$-th diagonal entry. Then, $\mathcal L=I-DAD$ and $\bar{\mathcal L}=I-\bar D\bar A\bar D$.
Define $\mathcal M:=I-\bar DA\bar D$. We will show that there are constants $C_1(\alpha),\,C_2(\alpha)$ such that 
\begin{align}
\mathbb P\left(\|\mathcal L-\mathcal M\|> C_1\sqrt{\frac{\log(n/\delta)}{Td_{\min}}}\right) \le \delta/2,  \label{Lapbd 1}\\
\mathbb P\left(\|\bar{\mathcal L}-\mathcal M\|> C_2\sqrt{\frac{\log(n/\delta)}{Td_{\min}}}; \widetilde F\right) \le \delta/2, \label{Lapbd 2}
\end{align}
where $\widetilde F=\bigcap\limits_{1\le i\le j\le n}\widetilde{F}_{ij}.$ The conclusion of Main Theorem (B) follows by combining the estimates in \eqref{Lapbd 1}, \eqref{Lapbd 2}, and Lemma \ref{lem:A est}(d), and using the triangle inequality. To show \eqref{Lapbd 1}, note that $I-\mathcal M=\bar DD^{-1}(I-\mathcal L)D^{-1}\bar D$, and hence using the triangle inequality and the fact that $\|I-\mathcal L\|\le 1$,
\begin{align}
    \left\|\mathcal{L} - \mathcal{M}\right\| =& \left\|\left(\bar{D}D^{-1}\right) \left(I-\mathcal{L}\right) \left(D^{-1}\bar{D}\right)-\left(I-\mathcal{L}\right)\right\|\nonumber \\
    \leq &\left\|\left(\bar{D}D^{-1}-I\right)\left(I-\mathcal{L}\right)\left(D^{-1}\bar{D}\right)\right\|+ \left\|\left(I-\mathcal{L}\right)\left(D^{-1}\bar{D}-I\right)\right\|\nonumber \\
     \leq &\left\|\bar{D}D^{-1} - I\right\| \left\| I - \mathcal{L}\right\| \left\|D^{-1}\bar{D}\right\|+\left\|I-\mathcal{L}\right\| \left\|D^{-1}\bar{D}-I\right\|\nonumber\\
     \leq &\left\|\bar{D}D^{-1} - I\right\| \left\|D^{-1}\bar{D}\right\|+ \left\|D^{-1}\bar{D}-I\right\|\nonumber\\
     \leq &\left\|\bar{D}D^{-1} - I\right\|(\left\|\bar{D}D^{-1} - I\right\|+2).\label{intm_bound}
\end{align}
To bound the RHS of \eqref{intm_bound}, note that $\|\bar DD^{-1}-I\|=\max\limits_{i\in[n]}|\sqrt{\ell_i/\bar\ell_i}-1|$. So, using union bound and Lemma \ref{lem:A est}(e),
\begin{align}
    \mathbb P\bigg(\|\bar DD^{-1}-I\|>\kappa\sqrt{\frac{\log(n/\delta)}{Td_{\min}}}\bigg)\le\delta/2,\label{intm_prob}
\end{align}
where $\kappa$ is as in Lemma \ref{lem:A est}(e). This observation \eqref{intm_prob} together with \eqref{intm_bound} and the fact that $Td_{\min}>(c+1)\log(n)$ proves \eqref{Lapbd 1} if we take $C_1=\kappa(\kappa+2)$. To show \eqref{Lapbd 2}, let $b_{ij}:=a_{ij}/\sqrt{\bar\ell_i\bar\ell_j}$ and $Y_{ij}:=\bkt{b_{ij}}\mathbf{1}_{\widetilde{F}_{ij}}E_{ij}$ for $i, j\in[n]$. Clearly,
$$\bar{\mathcal L}-\mathcal M=\sum_{1\le i \leq j\le n}Y_{ij}\text{ on the event }\widetilde{F}:=\bigcap_{1\le i\le j\le n}\widetilde{F}_{ij}.$$
Also, by the triangle inequality,
$$\bigg\|\sum_{1\le i\le j\le n} Y_{ij}\bigg\| \le \bigg\|\sum_{1\le i\le j\le n} \bkt{Y_{ij}}\bigg\|+\bigg\|\sum_{1\le i\le j\le n} \E (Y_{ij})\bigg\|=:B_1+B_2.$$
To bound $B_2$, we again use the fact that $\|\mathbf U\| \le \max_{i\in[n]}\sum_{j\in[n]}|u_{ij}|$ for any symmetric $n\times n$ matrix $\mathbf U$. Since $\E\bkt{b_{ij}}=0$, we have $\E(\bkt{b_{ij}\mathbf{1}_{\widetilde{F}_{ij}}})=\E(\bkt{b_{ij}\mathbf{1}_{\widetilde{F}_{ij}^c}}).$ Now we can use Cauchy-Schwartz inequality and the bound for $\mathbb P(\widetilde{F}_{ij}^c)$ given in Lemma \ref{lem:A est}(d) to have
\begin{align}
B_2 &\le \max_{i\in[n]}\sum_{j\in[n]} |\E(\bkt{b_{ij}}\mathbf 1_{\widetilde{F}_{ij}^c})|
\le \max_{i\in[n]}\sum_{j\in[n]}\sqrt{\E\bkt{b_{ij}}^2\mathbb P(\widetilde{F}_{ij}^c)} \notag \\
&\le \max_{i\in[n]}\sum_{j\in[n]} \sqrt{\frac{Tp_{ij}}{n^3\bar\ell_i\bar\ell_j}} 
\le \max_{i\in[n]}\sum_{j\in[n]} \frac{1}{\sqrt{n^3\bar\ell_i}} \le \frac{1}{\sqrt{nTd_{\min}}}. \label{B2 bd}
\end{align}
To bound $B_1$, we will use the matrix Bernstein inequality. Since 
$\|E_{ij}\|=1$, so 
\begin{align}
\|\bkt{Y_{ij}}\| = \left|\Bkt{\bkt{b_{ij}}\mathbf 1_{\widetilde F_{ij}}} \right| & \le \left|\Bkt{\bkt{b_{ij}}\mathbf 1(|\bkt{b_{ij}}|\le(Td_{\min}\log(n))^{-1/2})}\right| \nonumber \\
&\le \frac{2}{\sqrt{Td_{\min}\log(n)}}. \label{Y norm bd}
\end{align}
Using Lemma \ref{lem:A est}(b) and noting that $E_{ij}^2=\mathbf e_i\mathbf e_i^\top+\mathbf e_j\mathbf e_j^\top$ (resp.~$\mathbf e_i\mathbf e_i^\top$) for $i<j$ (resp.~$i=j$), 
\begin{align*}
\sum_{1\leq i\leq j\leq n}\mathbb{E}\Bkt{Y_{ij}}^2 \preccurlyeq \sum_{1\leq i\leq j\leq n}\E\bkt{b_{ij}}^2 E_{ij}^2
&\preccurlyeq\frac{(3-\alpha)}{(1-\alpha)}\sum_{1\le i<j\le n}\frac{Tp_{ij}}{\bar\ell_i\bar\ell_j}\left(\mathbf{e}_{i} \mathbf{e}_{i}^{\top}+\mathbf{e}_{j} \mathbf{e}_{j}^{\top}\right) \\
&\preccurlyeq \sum_{i\in[n]}\left(\frac{1}{\bar\ell_i}\sum_{j\in[n]}\frac{T(3-\alpha)p_{ij}}{(1-\alpha)\bar\ell_j}\right) \mathbf e_i\mathbf e_i^\top,
\end{align*}
which gives
\begin{align}
\bigg\|\sum_{1\leq i\leq j\leq n}\mathbb{E}\bkt{Y_{ij}}^2\bigg\|\le \max_{i\in[n]}\left(\frac{1}{\bar\ell_i}\sum_{j\in[n]}\frac{T(3-\alpha)p_{ij}}{(1-\alpha)\bar\ell_j}\right) \nonumber &\le \max_{i\in[n]}\left(\frac{1}{\bar\ell_i}\sum_{j\in[n]}\frac{(3-\alpha)p_{ij}}{(1-\alpha) d_{\min}}\right)\notag\\
&\le \frac{(3-\alpha)}{(1-\alpha) Td_{\min}}. \label{lap_Bern}
\end{align}
Combining \eqref{lap_Bern} with \eqref{Y norm bd}, applying the matrix Bernstein inequality, and taking
$$t=C_2\sqrt{\log(n/\delta)/Td_{\min}}\text{ where }\delta\in(n^{-c},\,1/2)$$
and $C_2=C_2(\alpha, c)$ satisfies $C_2^2/2\geq8(3-\alpha)/(1-\alpha)+8C_2\sqrt{c+1},$ we see that
\begin{align}
   \mathbb{P}\left(\bigg\|\sum_{1\leq i\leq j\leq n}\bkt{Y_{ij}}\bigg\|\geq t\right) 
  & \leq 2n\exp{\left(-\frac{t^2}{\frac{8(3-\alpha)}{(1-\alpha)Td_{\min}}+4t\sqrt{\frac{1}{Td_{\min}\log(n)}}}\right)} \notag \\
  & \leq 2n\exp\left(-\frac{C_2^2\frac{\log(4n/\delta)}{Td_{\min}}}{\frac{8(3-\alpha)}{Td_{\min}(1-\alpha)}+8C_2\frac{\sqrt{c+1}}{Td_{min}}}\right)\notag \\
  & \le \exp\left(-\frac{C_2^2\log(4n/\delta)}{\frac{8(3-\alpha)}{(1-\alpha)}+8C_2\sqrt{c+1}}\right)
  \le \delta/2. \label{lap_prob}
\end{align}
Combining \eqref{lap_prob} with \eqref{B2 bd} proves \eqref{Lapbd 2}. This completes the proof of (B).
\end{proof}

\begin{proof}[Proof of Lemma \ref{lem:A est}]
We will use the following construction of the lazy IER process based on the following independent sequences of random variables indexed by the set of all potential edges $\{ij: 1\le i\le j\le n\}$ of the networks.
$$\left(I_{ij}^{(t)}, t\ge 0\right) \;\sa{i.i.d.}\;\mathrm{Bernoulli}(p_{ij}),\text{ and }\left(G_{ij}^{(k)}, k\in\mathbb N\right) \;\sa{i.i.d.}\; \mathrm{Geometric}(1-\alpha).$$
For $1\le i\le j \le n$ and $k\ge 1$, let
$$S_{ij}^{(0)} := 0,\,S_{ij}^{(k)} := \sum_{\ell\in[k]} G_{ij}^{(\ell)} \text{ for } k\in\mathbb N,\text{ and } T_{ij} := \max\left\{k\ge 0: S_{ij}^{(k)} \le T\right\},$$
so $T_{ij} \sim\mathrm{Bin}(T, 1-\alpha)$. Then, for the edge $ij$,  $S_{ij}^{(k)}$ $\left(\text{respectively } T_{ij},\, I^{(t)}_{ij}\right)$ has the same distribution as the $k$-th renewal time (respectively the number of renewals by time $T$, the indicator of whether it is present at time $t$ when its status is renewed at that time). So, if $A=((a_{ij}))_{i,j\in[n]}$, then 
\[
    a_{i j}\eq{d}\bigg(\sum_{k=1}^{T_{ij}} G_{ij}^{(k)}I_{i j}^{\left(S_{i j}^{(k-1)}\right)}\bigg) + \big(T + 1 - S_{ij}^{(T_{ij})}\big) I_{ij}^{\big(S_{i j}^{(T_{ij})}\big)}
\]
for all $i, j \in[n].$ Using the above representation, we will prove Lemma \ref{lem:A est}.
\vskip10pt
\noindent
{\bf (a).} Let $\mathcal G_{ij}:=\left(G_{ij}^{(t)}, t\ge 0\right)$. Observe that
\begin{align*}
\mathbb{E}\left(a_{i j} \mid \mathcal G_{ij}\right) = \sum_{k=1}^{T_{ij}} G_{i j}^{(k)} p_{i j}+\left(T+1-S_{i j}^{(T_{ij})}\right) p_{i j}=(T+1)p_{i j}.
\end{align*}
Hence $\bar a_{i j}=(T+1)p_{i j}$.
\vskip10pt
\noindent
{\bf (b).}  Here, $\mathbb{V}\text{ar}(a_{i j})=\mathbb{E}(\mathbb{V}\text{ar}(a_{i j} \mid \mathcal G_{i j}))$, as $\mathbb{E}(a_{i j} \mid \mathcal G_{i j})$ is a constant. So,
\begin{align}
    \mathbb{V}\text{ar}(a_{i j}) &= \mathbb{E}(\mathbb{V}\text{ar}(a_{i j} \mid \mathcal G_{i j})) \nonumber \\
&=p_{i j}(1-p_{i j})\mathbb{E}\bigg[\sum_{k\in[T_{ij}]} \left(G_{i j}^{(k)}\right)^2 +\left(T+1-S_{i j}^{(T_{ij})}\right)^{2} \bigg].\label{eq:var}
\end{align}
Since $T_{ij}$ is a stopping time, using Wald's first equation the first sum in (\ref{eq:var}) is
\begin{align}
    \mathbb E\bigg[\sum_{k\in[T_{ij}]} \left(G_{i j}^{(k)}\right)^2\bigg]  &= \mathbb E (T_{ij})\cdot \mathbb E \left(G_{i j}^{(1)}\right)^2= \frac{T(1-\alpha)(1+\alpha)}{(1-\alpha)^{2}}.\label{eq:1st}
\end{align}
On the other hand, if $B:=\left\{S_{i j}^{(T_{ij})}>T+1-\sqrt T\right\}$, then $\mathbb P(B^c)\le\alpha^{\sqrt T}$, and hence
\begin{align}
\mathbb E\left[\left(T+1-S_{i j}^{(T_{ij})}\right)^{2} \mathbf 1_B\right] &\le T \text{ and }\notag\\
\mathbb E\left[\left(T+1-S_{i j}^{(T_{ij})}\right)^{2} \mathbf 1_{B^c}\right]& \le T^2\mathbb P(B^c)\le T^2\alpha^{\sqrt T} \le T\label{eq:2nd}
\end{align}
if $T$ is large enough. Combining (\ref{eq:var}), (\ref{eq:1st}) and (\ref{eq:2nd}), we see that there is a constant $c_1(\alpha)$ such that
\begin{align}
\mathbb{V}\text{ar}(a_{i j})  \le Tp_{ij}(1-p_{ij})\frac{3-\alpha}{1-\alpha} \text{ if $T\ge c_1(\alpha)$}.\label{eq:var_aij}
\end{align}

\noindent
{\bf (c).} The argument for bounding $\mathbb P(F_{ij}^c)$ consists of two cases.
\vskip5pt
\noindent
\boxed{\textbf{Case 1.}}
$$Td_{\max}\ge \frac{(1-\alpha)^2}{9(1+\alpha)^2}\frac{n^2}{\log(n)}.$$
Let $H_{ij}:= \{T_{ij} < (1+x)T(1-\alpha)\}$, where
$$x=3\sqrt{\frac{n\log(n)}{Td_{\max}(1-\alpha)}}\le\frac{6}{(1-\alpha)^{3/2}}\frac{\log(n)}{\sqrt n}\le 1 \text{ if $n$ is large enough.}$$
Using a standard binomial upper-tail large deviation estimate (see, e.g., Theorem 4.4 of \cite{mitzenmacher2017probability}), we get
\begin{align}
\mathbb P(H_{ij}^c) 
&\le \exp\left(-T(1-\alpha)\frac{x^2}{2+x}\right) 
\le e^{-T(1-\alpha)x^2/3} \notag \\
&=  \exp\left(-T(1-\alpha) \frac{9(\log(n))^2}{3n(1-\alpha)}\frac{Td_{\max}/\log(n)}{(Td_{\max}/n)^2}\right) 
\le e^{-3\log(n)} \le 1/n^3. \label{H_est}
\end{align}
$a_{ij}$ is stochastically dominated  by $\sum_{k\in[(1+x) T(1-\alpha)]}I_{ij}^{(k)}G_{ij}^{(k)}$ on $H_{ij}$, so for any $0<\lambda<\log\frac 1\alpha$,
\begin{align*}
\mathbb E[e^{\lambda a_{ij}}\mathbf 1_{H_{ij}}] &\le \mathbb E\exp\left[\lambda \sum_{k\in[(1+x) T(1-\alpha)]}I_{ij}^{(k)}G_{ij}^{(k)}\right]\\
&=\left[(1-p_{ij})+\frac{p_{ij}(1-\alpha)}{e^{-\lambda}-\alpha }\right]^{(1+x) T(1-\alpha)} \\
&\le \exp\left((1+x)\bar a_{ij}\psi(\lambda)\right),
\end{align*}
where
$$\psi(\lambda):=\frac{(1-\alpha)^2}{e^{-\lambda}-\alpha}-(1-\alpha)\implies\psi'(\lambda)=\frac{(1-\alpha)^2e^{-\lambda}}{(e^{-\lambda}-\alpha)^2}.$$
Also, $\psi(0)=0$ and $\psi''(\lambda)\ge 0$ for all $\lambda$, so $\psi(\lambda)\le \lambda\psi'(\lambda)$ for all $\lambda$ using a standard analysis argument. Using this, letting
$$\lambda:=\frac{6(\log(n))^{3/2}}{\sqrt{Td_{\max}}},$$
and applying Markov inequality for $\mathbb P(\cdot|H)$,  
\begin{align} \nonumber
\mathbb P\bigg(a_{ij}>\bar a_{ij} &+\sqrt{\frac{Td_{\max}}{\log(n)}}; H\bigg) \\ 
&\le \exp\left(\lambda \bar a_{ij} \left\{(1+x) \psi'(\lambda) -1-\sqrt{\frac{Td_{\max}}{\log(n)}}\frac{1}{\bar a_{ij}}\right\} \right). \label{eq:uptail}
\end{align}
To estimate the RHS of \eqref{eq:uptail} note that if $n$ is large enough, then
\[
\lambda = \frac{6(\log(n))^2}{n}\frac{\sqrt{Td_{\max}/\log(n)}}{Td_{\max}/n}  
\le \frac{18(1+\alpha)(\log(n))^2}{n(1-\alpha)} <\frac{1-\alpha}{2},
\]
so that
\begin{equation}
\psi'(\lambda) \le \left(\frac{1-\alpha}{1-\lambda-\alpha}\right)^2
\le \left(1+\frac{2\lambda}{1-\alpha}\right)^2
\le \left(1+\frac{6\lambda}{1-\alpha}\right), \label{psi'bd}
\end{equation}
and
\begin{align*}
(1+x)\psi'(\lambda) &\le 1+x+\frac{12\lambda}{1-\alpha} \\ 
&\le 1+\left[\frac{6\log(n)}{\sqrt{n(1-\alpha)}}+\frac{12(\log(n))^2}{n(1-\alpha)}\right]\frac{\sqrt{Td_{\max}/\log(n)}}{Td_{\max}/n} \\
&\le 1+\frac 12 \frac{\sqrt{Td_{\max}/\log(n)}}{Td_{\max}/n} \le 1+\frac 12 \frac{\sqrt{Td_{\max}/\log(n)}}{\bar a_{ij}}.
\end{align*}
Plugging this bound in the RHS of \eqref{eq:uptail}, 
\begin{align}
\mathbb P\left(a_{ij}>\bar a_{ij}+\sqrt{\frac{Td_{\max}}{\log(n)}}; H_{ij}\right) 
\le \exp\left(-\frac\lambda 2 \sqrt{Td_{\max}/\log(n)}\right)
\le n^{-3}. \label{UpperTail}
\end{align}
Now, we estimate the lower tail of $a_{ij}$. Let $\widetilde H_{ij}:=\{T_{ij}>(1-x)T(1-\alpha)
\}$. Then using a standard binomial lower-tail large deviation estimate (see, e.g., Theorem 4.5 of \cite{mitzenmacher2017probability}) and 
the lower bound for $x^2$ used in \eqref{H_est},
\begin{align}
\mathbb P(\widetilde H_{ij}^c) \le e^{-T(1-\alpha)x^2/4} \le n^{-3}.\label{H_tilde_est} 
\end{align}
Clearly, $a_{ij}$ stochastically dominats $\sum_{k\in[(1-x)(1-\alpha)T]}I_{ij}^{(k)}G_{ij}^{(k)}$ on $\widetilde H_{ij}$, so for any $\lambda>0$,
\begin{align*}
    \mathbb E[e^{-\lambda a_{ij}}\mathbf 1_{\widetilde H_{ij}}] &\le \mathbb E\exp\bigg[-\lambda \sum_{k\in[(1-x) T(1-\alpha)]}I_{ij}^{(k)}G_{ij}^{(k)}\bigg] \\
                                                                &=\bigg[(1-p_{ij})+\frac{p_{ij}(1-\alpha)}{e^{\lambda}-\alpha }\bigg]^{(1-x) T(1-\alpha)} \le \exp\left(-\bar a_{ij}(1-x)\phi(\lambda)\right),
\end{align*}
where $\phi(\lambda):=(1-\alpha)-\frac{(1-\alpha)^2}{e^{\lambda}-\alpha}$ satisfies $\phi'(\lambda)=\frac{(1-\alpha)^2e^{\lambda}}{(e^{\lambda}-\alpha)^2}$.
Also, $\phi(0)=0$ and $\phi''(\lambda)\le 0$ for all $\lambda$, so $\phi(\lambda)\ge \lambda\phi'(\lambda)$ for all $\lambda$, using a standard analysis argument. Using this, recalling the value of $\lambda$, and applying Markov inequality for $\mathbb P(\cdot|\widetilde H_{ij})$,  
\begin{align}
\mathbb P\bigg(a_{ij}<\bar a_{ij} -\sqrt{\frac{Td_{\max}}{\log(n)}}; \widetilde H_{ij}\bigg) \le \exp\left(\lambda \bar a_{ij} \left\{1-\sqrt{\frac{Td_{\max}}{\log(n)}}\frac{1}{\bar a_{ij}}-(1-x) \phi'(\lambda)\right\} \right). \label{eq:lowtail}
\end{align}
To estimate the RHS of \eqref{eq:lowtail} note that
\begin{align*}
    &\frac{e^\lambda-1}{e^\lambda-\alpha}=\frac{1-e^{-\lambda}}{1-\alpha e^{-\lambda}}\le\frac{1-e^{-\lambda}}{1-\alpha}\le \frac{\lambda}{1-\alpha}\\
    \implies & 1-\frac{e^\lambda-1}{e^\lambda-\alpha}\ge1-\frac{\lambda}{1-\alpha}\implies\frac{1-\alpha}{e^\lambda-\alpha}\ge1-\frac{\lambda}{1-\alpha}.
\end{align*}
Now, take $\lambda = \frac{6(\log(n))^2}{n}\frac{\sqrt{Td_{\max}/\log(n)}}{Td_{\max}/n}$ to obtain
\begin{equation}
\phi'(\lambda)\ge \left(\frac{1-\alpha}{e^\lambda-\alpha}\right)^2
= \left(1-\frac{e^\lambda-1}{e^\lambda-\alpha}\right)^2
\ge \left(1-\frac{\lambda}{(1-\alpha)}\right)^2
\ge \left(1-\frac{2\lambda}{1-\alpha}\right), \label{phi'bd}
\end{equation}
and
\begin{align*}
(1-x)\phi'(\lambda) &\ge 1-x-\frac{\lambda}{1-\alpha} \\
&\ge 1-\left[\frac{6\log(n)}{\sqrt{n(1-\alpha)}}+\frac{(\log(n))^2}{n(1-\alpha)}\right]\frac{\sqrt{Td_{\max}/\log(n)}}{Td_{\max}/n} \\
&\ge 1-\frac 12 \frac{\sqrt{Td_{\max}/\log(n)}}{Td_{\max}/n} \ge 1-\frac 12 \frac{\sqrt{Td_{\max}/\log(n)}}{\bar a_{ij}}.
\end{align*}
Plugging this bound in the RHS of \eqref{eq:lowtail}, we have
\begin{align}
\mathbb P\left(a_{ij}<\bar a_{ij}-\sqrt{\frac{Td_{\max}}{\log(n)}}; \widetilde H_{ij}\right) 
\le \exp\left(-\frac\lambda 2 \sqrt{Td_{\max}/\log(n)}\right)
\le n^{-3}. \label{LowerTail}
\end{align}
Combining \eqref{H_est}, \eqref{UpperTail}, \eqref{H_tilde_est}, \eqref{LowerTail}, we get the desired bound for $\mathbb P(F_{ij}^c)$ for Case 1.
\vskip5pt
\noindent
\boxed{\textbf{Case 2.}}
$$Td_{\max}<\frac{(1-\alpha)^2}{9(1+\alpha)^2}\frac{n^2}{\log(n)}.$$
Since $Td_{\max}/n\le\frac{1-\alpha}{3(1+\alpha)}\sqrt{Td_{\max}/\log(n)}$ in Case 2, the lower tail probability (LHS of \eqref{LowerTail}) of $a_{ij}$ must be 0, as $a_{ij}$ is nonnegative. To estimate the upper tail of $a_{ij}$, note that $a_{ij}$ is stochastically dominated  by $\sum_{k\in[T]}I_{ij}^{(k)}G_{ij}^{(k)}$. So, for any $0<\lambda<\log(1/\alpha)$,
\begin{align*}
\mathbb E[e^{\lambda a_{ij}}] \le \mathbb E\exp\left[\lambda \sum_{k\in[T]}I_{ij}^{(k)}G_{ij}^{(k)}\right] = \left[(1-p_{ij})+\frac{p_{ij}(1-\alpha)}{e^{-\lambda}-\alpha }\right]^{T} 
\le \exp\left( \bar a_{ij} \varphi(\lambda) \right),
\end{align*}
where $\varphi(\lambda):=\frac{1-\alpha}{e^{-\lambda}-\alpha}-1$ satisfies $\varphi'(\lambda)=\frac{(1-\alpha)e^{-\lambda}}{(e^{-\lambda}-\alpha)^2}$. Also, $\varphi(0)=0$ and $\varphi''(\lambda)\ge 0$ for all $\lambda$, so $\varphi(\lambda)\le \lambda\varphi'(\lambda)$ for all $\lambda$.
Using this, taking $\lambda=\log\frac{2}{1+\alpha}$, noting that $\varphi'(\lambda)=\frac{2(1+\alpha)}{1-\alpha} \le \frac 23 \sqrt{Td_{\max}/\log(n)}$, and applying Markov inequality, we get
\begin{align}
\mathbb P\left(a_{ij}>\bar a_{ij}+\sqrt{\frac{Td_{\max}}{\log(n)}}\right) 
&\le\exp\left(\bar a_{ij}\varphi(\lambda) -\lambda \bar a_{ij}-\lambda\sqrt{Td_{\max}/\log(n)} \right) \label{eq:uptail_case2} \\
&\le
\exp\left(\lambda \bar a_{ij} \left\{\frac{\varphi(\lambda)}{\lambda} -1-\frac{\sqrt{Td_{\max}/\log(n)}}{\bar a_{ij}}\right\} \right) \notag \\
&\le \exp\left(-\frac\lambda 3\sqrt{Td_{\max}/\log(n)}\right) \le n^{-3},   \notag
\end{align}
 if $Td_{\max}\ge(9/\lambda)^2(\log(n))^3$. The RHS of \eqref{eq:uptail_case2} gives the desired bound for $\mathbb P(F_{ij}^c)$ in Case 2.

\noindent
{\bf (d).} The argument for bounding $\mathbb P(\widetilde F_{ij}^c)$ consists of two cases.
\vskip5pt
\noindent
\boxed{\textbf{Case 1.}}
$$\bar a_{ij}\ge\frac{(1-\alpha)}{3(1+\alpha)}\sqrt{\bar\ell_i\bar\ell_j/(Td_{\min}\log(n))}.$$
Let $H_{ij}:= \{T_{ij} < (1+x)T(1-\alpha)\}$, where
\[
    x=\frac{1-\alpha}{3(1+\alpha)} \sqrt{\frac{\bar\ell_i\bar\ell_j}{Td_{\min}\log(n)}} \frac{1}{\bar a_{ij}} \le 1.
\]
Using the standard binomial upper-tail large deviation estimate, if $\frac{d_{\min}}{d_{\max}} \ge \frac{81(1+\alpha)^2}{(1-\alpha)^3}\frac{(\log(n))^2}{n}$, then
\begin{equation}\label{H_est2}
    \mathbb P(H_{ij}^c) \le \exp\left(-T(1-\alpha)\frac{x^2}{2+x}\right) \le e^{-T(1-\alpha)x^2/3} \le n^{-3},
\end{equation}
as
\[
    \frac{T(1-\alpha)x^2}{9\log(n)} \ge \frac{(1-\alpha)^3}{81(1+\alpha)^2}\frac{T\bar\ell_i\bar\ell_j}{Td_{\min}\log^2(n)T^2p_{ij}^2} \ge  \frac{(1-\alpha)^3}{81(1+\alpha)^2}\frac{nd_{\min}}{\log^2(n)d_{\max}} \ge 1.
\]
Now $a_{ij}$ is stochastically dominated  by $\sum_{k\in[(1+x) T(1-\alpha)]}I_{ij}^{(k)}G_{ij}^{(k)}$ on $H_{ij}$, so for any $0<\lambda<\log\frac 1\alpha$,
\begin{align*}
    \mathbb E[e^{\lambda a_{ij}}\mathbf 1_{H_{ij}}] \le \mathbb E\exp\left[\lambda \sum_{k\in[(1+x) T(1-\alpha)]}I_{ij}^{(k)}G_{ij}^{(k)}\right] &=\left[(1-p_{ij})+\frac{p_{ij}(1-\alpha)}{e^{-\lambda}-\alpha }\right]^{(1+x) T(1-\alpha)} \\
&\le \exp\left((1+x)\bar a_{ij}\psi(\lambda)\right),
\end{align*}
where  $\psi(\lambda):=\frac{(1-\alpha)^2}{e^{-\lambda}-\alpha}-(1-\alpha)$ satisfies $\psi'(\lambda)=\frac{(1-\alpha)^2e^{-\lambda}}{(e^{-\lambda}-\alpha)^2}$. Also, $\psi(0)=0$ and $\psi''(\lambda)\ge 0$ for all $\lambda$, so $\psi(\lambda)\le \lambda\psi'(\lambda)$ for all $\lambda$, using a standard analysis argument. Using this fact and the Markov inequality for $\mathbb P(\cdot|H_{ij})$, we get
\begin{align} \nonumber
\mathbb P\bigg(a_{ij}>\bar a_{ij}&+\sqrt{\frac{\bar\ell_i\bar\ell_j}{Td_{\min}\log(n)}}; H_{ij}\bigg) \\
&\le \exp\left(\lambda \bar a_{ij} \left\{(1+x) \psi'(\lambda) -1-\sqrt{\frac{\bar\ell_i\bar\ell_j}{Td_{\min}\log(n)}}\frac{1}{\bar a_{ij}}\right\} \right) \label{eq:uptail2}
\end{align}
for any $0<\lambda<\log(1/\alpha)$. Next we note that if
\[
    \lambda=\frac{(1-\alpha)^2}{36(1+\alpha)} \sqrt{\frac{\bar\ell_i\bar\ell_j}{Td_{\min}\log(n)}} \frac{1}{\bar a_{ij}},
\]
then
\[
    \lambda \le \frac{(1-\alpha)^2}{36(1+\alpha)} \frac{3(1+\alpha)}{1-\alpha}  <\frac{1-\alpha}{2} <\log(1/\alpha),
\]
as $x\le 1$. So
\[
    \psi'(\lambda) \le \left(\frac{1-\alpha}{1-\lambda-\alpha}\right)^2 \le \left(1+\frac{2\lambda}{1-\alpha}\right)^2 \le \left(1+\frac{6\lambda}{1-\alpha}\right),
\]
and
\begin{align*}
    (1+x)\psi'(\lambda) &\le 1+x+\frac{12\lambda}{1-\alpha} \\
                        &\le 1+\left[\frac{1-\alpha}{3(1+\alpha)}+\frac{1-\alpha}{3(1+\alpha)}\right] \sqrt{\frac{\bar\ell_i\bar\ell_j}{Td_{\min}\log(n)}}\frac{1}{\bar a_{ij}} \\ 
                        &\le 1+\frac 23 \sqrt{\frac{\bar\ell_i\bar\ell_j}{Td_{\min}\log(n)}}\frac{1}{\bar a_{ij}}. 
\end{align*}
Plugging this bound in the RHS of \eqref{eq:uptail2}, 
\begin{align}
\mathbb P\left(a_{ij}>\bar a_{ij}+\sqrt{\frac{\bar\ell_i\bar\ell_j}{Td_{\min}\log(n)}}; H_{ij}\right) 
&\le \exp\left(-\frac\lambda 3 \sqrt{\frac{\bar\ell_i\bar\ell_j}{Td_{\min}\log(n)}}\right) \notag\\
& \le \exp\left(-\frac{(1-\alpha)^2}{108(1+\alpha)}\frac{\bar\ell_i\bar\ell_j}{Td_{\min}\log(n)\bar a_{ij}}\right) \label{UpperTail2} \le n^{-3}
\end{align}
if $d_{\min}/d_{\max} \ge 324(1+\alpha)(1-\alpha)^{-2}(\log(n))^2/n$, since $\bar\ell_i, \bar\ell_j\ge Td_{\min}$ and $\bar a_{ij} \le Td_{\max}/n$. 

Now, we estimate the lower tail of $a_{ij}$. Let $\widetilde H_{ij}:=\{T_{ij}>(1-x)T(1-\alpha)
\}$. Then using the standard binomial lower-tail large deviation estimate and 
the lower bound for $x^2$ used in \eqref{H_est},
\begin{align}
\mathbb P(\widetilde H_{ij}^c) \le e^{-T(1-\alpha)x^2/4} \le n^{-3}.\label{H_tilde_est2} 
\end{align}
Clearly, $a_{ij}$ stochastically dominates $\sum_{k\in[(1-x)(1-\alpha)T]}I_{ij}^{(k)}G_{ij}^{(k)}$ on $\widetilde H_{ij}$, so for any $\lambda>0$,
\begin{align*}
    \mathbb E[e^{-\lambda a_{ij}}\mathbf 1_{\widetilde H_{ij}}] &\le \mathbb E\exp\bigg[-\lambda \sum_{k\in[(1-x) T(1-\alpha)]}I_{ij}^{(k)}G_{ij}^{(k)}\bigg] \\
                                                                &= \bigg[(1-p_{ij})+\frac{p_{ij}(1-\alpha)}{e^{\lambda}-\alpha }\bigg]^{(1-x) T(1-\alpha)} \le \exp(-\bar a_{ij}(1-x)\phi(\lambda)),
\end{align*}
where $\phi(\lambda):=(1-\alpha)-\frac{(1-\alpha)^2}{e^{\lambda}-\alpha}$ satisfies $\phi'(\lambda)=\frac{(1-\alpha)^2e^{\lambda}}{(e^{\lambda}-\alpha)^2}$. Also, $\phi(0)=0$ and $\phi''(\lambda)\le 0$ for all $\lambda$, so $\phi(\lambda)\ge \lambda\phi'(\lambda)$ for all $\lambda$. using a standard analysis argument. Using this, recalling the value of  $\lambda$, and applying Markov inequality for $\mathbb P(\cdot|\widetilde H_{ij})$,  
\begin{align} \nonumber
\mathbb P\bigg(a_{ij}<\bar a_{ij}&-\sqrt{\frac{\bar\ell_i\bar\ell_j}{Td_{\min}\log(n)}}; \widetilde H_{ij}\bigg) \\
&\le \exp\left(\lambda \bar a_{ij} \left\{1-\sqrt{\frac{\bar\ell_i\bar\ell_j}{Td_{\min}\log(n)}}\frac{1}{\bar a_{ij}}-(1-x) \phi'(\lambda)\right\} \right). \label{eq:lowtail2}
\end{align}
To estimate the RHS of \eqref{eq:lowtail2} first note that $\lambda\le\frac{1-\alpha}{12}<1/12$, so $e^\lambda<1+2\lambda$. Using this bound,
\[
    \phi'(\lambda) \ge \left(1-\frac{e^\lambda-1}{e^\lambda-\alpha}\right)^2
\ge \left(1-\frac{2\lambda}{1-\alpha}\right)^2 \ge \left(1-\frac{4\lambda}{1-\alpha}\right),
\]
and
\begin{align*}
    (1-x)\phi'(\lambda) &\ge 1-x-\frac{4\lambda}{1-\alpha} \\
                        &\ge 1-\left[\frac{1-\alpha}{3(1+\alpha)}+\frac{1-\alpha}{9(1+\alpha)}\right] \sqrt{\frac{\bar\ell_i\bar\ell_j}{Td_{\min}\log(n)}}\frac{1}{\bar a_{ij}} \\  
                        &\ge 1-\frac 23 \sqrt{\frac{\bar\ell_i\bar\ell_j}{Td_{\min}\log(n)}}\frac{1}{\bar a_{ij}}. 
\end{align*}
Plugging this bound in the RHS of \eqref{eq:lowtail2}, and following the argument that leads to \eqref{UpperTail2},
\begin{align}
\mathbb P\left(a_{ij}<\bar a_{ij}-\sqrt{\frac{\bar\ell_i\bar\ell_j}{Td_{\min}\log(n)}}; \widetilde H_{ij}\right) 
\le \exp\left(-\frac\lambda 2 \sqrt{\frac{\bar\ell_i\bar\ell_j}{Td_{\min}\log(n)}}\right)
\le n^{-3}. \label{LowerTail2}
\end{align}
Combining \eqref{H_est2}, \eqref{UpperTail2}, \eqref{H_tilde_est2}, \eqref{LowerTail2}, we get the desired bound for $\mathbb P(\widetilde F_{ij}^c)$ for Case 1.
\vskip5pt
\noindent
\boxed{\textbf{Case 2.}}
$$\bar a_{ij}<\frac{1-\alpha}{3(1+\alpha)}\sqrt{\bar\ell_i\bar\ell_j/(Td_{\min}\log(n))}.$$
So, in this case, the lower tail probability (LHS of \eqref{LowerTail2}) of $a_{ij}$ must be 0, as $a_{ij}$ is nonnegative. To estimate the upper tail of $a_{ij}$, note that $a_{ij}$ is stochastically dominated  by $\sum_{k\in[T]}I_{ij}^{(k)}G_{ij}^{(k)}$. So, for any $0<\lambda<\log(1/\alpha)$,
\begin{align*}
\mathbb E[e^{\lambda a_{ij}}] \le \mathbb E\exp\left[\lambda \sum_{k\in[T]}I_{ij}^{(k)}G_{ij}^{(k)}\right] 
&= \left[(1-p_{ij})+\frac{p_{ij}(1-\alpha)}{e^{-\lambda}-\alpha }\right]^{T} \\
&\le  \exp\left( \bar a_{ij} \varphi(\lambda) \right),
\end{align*}
where $\varphi(\lambda):=\frac{1-\alpha}{e^{-\lambda}-\alpha}-1$ satisfies $\varphi'(\lambda)=\frac{(1-\alpha)e^{-\lambda}}{(e^{-\lambda}-\alpha)^2}$. Also, $\varphi(0)=0$ and $\varphi''(\lambda)\ge 0$ for all $\lambda$, so $\varphi(\lambda)\le \lambda\varphi'(\lambda)$ for all $\lambda$.
Using this, taking $\lambda=\log\frac{2}{1+\alpha}$, applying Markov inequality, and noting that 
\begin{equation}
    \varphi'(\lambda)=\frac{2(1+\alpha)}{1-\alpha} \le \frac 23 \sqrt{\frac{\bar\ell_i\bar\ell_j}{Td_{\min}\log(n)}} \frac{1}{\bar a_{ij}},
\end{equation}
we have
\begin{align} \label{eq:uptail_case22} \nonumber
\mathbb P\left(a_{ij}>\bar a_{ij}+\sqrt{\frac{\bar\ell_i\bar\ell_j}{Td_{\min}\log(n)}}\right) 
&\le\exp\left(\bar a_{ij}\varphi(\lambda) -\lambda \bar a_{ij}-\lambda\sqrt{\frac{\bar\ell_i\bar\ell_j}{Td_{\min}\log(n)}} \right) \\ \nonumber
&\le
\exp\left(\lambda \bar a_{ij} \left\{\frac{\varphi(\lambda)}{\lambda} -1-\sqrt{\frac{\bar\ell_i\bar\ell_j}{Td_{\min}\log(n)}} \frac{1}{\bar a_{ij}}\right\} \right) \\
&\le \exp\left(-\frac\lambda 3\sqrt{\frac{\bar\ell_i\bar\ell_j}{Td_{\min}\log(n)}} \right) \le n^{-3},
\end{align}
 if $Td_{\min}\ge(9/\lambda)^2(\log(n))^3$. The RHS of \eqref{eq:uptail_case22} gives the desired bound for $\mathbb P(\widetilde F_{ij}^c)$ in Case 2.

\noindent
{\bf (e).} First we note that $|\sqrt{\ell_i/\bar\ell_i}-1| \le |\ell_i/\bar\ell_i-1|$ on $\{|\ell_i/\bar\ell_i-1|\le\frac 34\}$, since the map $x\mapsto\sqrt{1+x}$ satisfies
\begin{align*}
    \sqrt{1+x}-1=\frac{x}{2\sqrt{1+x^*}} \text{ for some } x^*\in[-x, x],
\end{align*}
we have $|\sqrt{1+x}-1|\leq |x|$ for any $x \in [-3/4,3/4]$. So it suffices to show that there is a $\kappa(\alpha)>0$  for which  $\mathbb P(|\ell_i/\bar\ell_i-1|>\kappa(\alpha)\sqrt{\log(n/\delta)/Td_{\min}}) \le \delta/2n$. To prove this bound, we define the events $L_i:=\cup_{\star\in\{+,-\}, j\in[n]} K_{ij}^\star \cup_{\star\in\{+,-\}} J_i^\star$, where 
\begin{equation}
    J_i^\pm:=\bigg\{\sum_{j\in[n]}\pm\bkt{T_{ij}p_{ij}} > \sqrt{3\bar\ell_i(1-\alpha)\log(n/\delta)}\bigg\},
\end{equation}
and
\begin{equation}
    K_{ij}^\pm:=\bigg\{\pm\bkt{T_{ij}} >\frac{4-3\alpha}{1-\alpha} \sqrt{T(1-\alpha)\log(n/\delta)}\bigg\}
\end{equation}
for  $i, j \in [n]$, and show 
\begin{align} \label{ingrad}
    &(i)\,\, \mathbb P(L_i) \le \delta/(6n), \\ \nonumber
    &(ii)\,\, \mathbb P\left(\frac{\ell_i}{\bar\ell_i}>1 + \frac{14}{\sqrt{1-\alpha}}\sqrt{\frac{\log(n/\delta)}{Td_{\min}}}; L_i^c\right) \le \frac{\delta}{6n}, \\ \nonumber
    &(iii)\,\, \mathbb P\left(\frac{\ell_i}{\bar\ell_i}<1 -\frac{14}{\sqrt{1-\alpha}}\sqrt{\frac{\log(n)}{Td_{\min}}}; L_i^c\right) \le \frac{\delta}{6n}.
\end{align}
  The proof of (e) follows easily if we use the union bound and the probability bounds of \eqref{ingrad}(i)-(iii). Now we prove these bounds.

To prove \eqref{ingrad}(i), we bound $\mathbf P(K_{ij}^\pm)$ and $\mathbf P(J_{i}^\pm)$ separately. Now, if $T\le \frac{\log(n)}{1-\alpha}$, then $\mathbb P(K_{ij}^\pm)=0$, as $T_{ij}\in[0, T]$ and $T\le \frac{1}{1-\alpha}\sqrt{T(1-\alpha)\log(n)}$.
 On the other hand,  if $T> \frac{\log(n)}{1-\alpha}$, then taking $x=\frac{4-3\alpha}{1-\alpha}\sqrt{\log(n/\delta)/T(1-\alpha)}$ and using the standard binomial upper-tail large deviation estimate, we have that
\begin{align*}
 \mathbb P(K_{ij}^\pm) \le \mathbb P\left(|\bkt{T_{ij}}|>x\E T_{ij}\right) \le \exp\left(-\frac{T(1-\alpha)x^2}{2+x}\right) &\le \exp\left(-\frac{\left(3+\frac{1}{1-\alpha}\right)^2\log(n/\delta)}{2+\left(3+\frac{1}{1-\alpha}\right)}\right) \\
 &\le \delta^2/n^2.
 \end{align*}
So, using the union bound, 
\begin{equation} \label{Kij bd}
    \mathbb P\left(\cup_{\star\in\{+,-\}, j\in[n]}K_{ij}^\star\right) \le \frac{\delta}{12n}
\end{equation}
if $n$ is large enough.

To bound $\mathbb P(J_i^\pm)$, we take 
 $x=\sqrt{5\log(n/\delta)/(\bar\ell_i(1-\alpha))}$, and consider the functions $\gamma_{j,\pm}(u):=(1\pm u)\log(1\pm u)p_{ij}-(1\pm u)^{ p_{ij}}+1$ for $u\ge 0$. Clearly, $\gamma_{j,\pm}(0)=\gamma_{j,\pm}'(0)=0$, $\gamma_{j,+}''(u^*)\ge\gamma_{j,+}''(u)\ge p_{ij}/(1+u) \ge p_{ij}/2$ for all $u^*\in(0, u)$ where $u$ is small enough, and $\gamma_{j,-}''(u^*)\ge\gamma_{j,-}''(0)\ge p_{ij}$ for all $u^*\in(0, u)$. So, using a standard analysis argument, we get $\gamma_{j,\pm}(x) \ge  p_{ij} x^2/4$ if $Td_{\min}>C(\alpha)\log(n)$ and $C$ is large enough. Noting that $\E e^{\theta T_{ij}} = [\alpha+(1-\alpha) e^\theta]^T\le \exp(T(1-\alpha)(e^\theta-1))$ for all $\theta\in\mathbb R$ and $j\in[n]$, and using Markov inequality with $\theta_\pm=\log(1\pm x)$, we have that
\begin{align} \nonumber
\mathbb P(J_i^+) &\le \mathbb P\bigg(\sum_{j\in[n]}T_{ij}p_{ij}>(1+x)\bar\ell_i(1-\alpha)\bigg) \\ \nonumber
&\le \E\exp\bigg(\theta_+\sum_{j\in[n}T_{ij}p_{ij}-\theta_+(1+x)\bar\ell_i(1-\alpha)\bigg) \\
&\le \exp\bigg(-T(1-\alpha)\sum_{j\in[n]}\gamma_{j,+}(x)\bigg) \le e^{-\bar\ell_i(1-\alpha) x^2/4} \le (\delta/n)^{5/4}\le \delta/(24n) \label{Ji bd1}
\end{align}
if $n$ is large enough. Also,
\begin{align} \nonumber
 \mathbb P(J_i^-) &\le \mathbb P\bigg(\sum_{j\in[n]}T_{ij}p_{ij}<(1-x)\bar\ell_i(1-\alpha)\bigg) \\ \nonumber
 &\le \E\exp\bigg(-\theta_-\sum_{j\in[n}T_{ij}p_{ij}+\theta_-(1-x)\bar\ell_i(1-\alpha)\bigg) \\
&\le \exp\bigg(-T(1-\alpha)\sum_{j\in[n]}\gamma_{j,-}(x)\bigg) \le e^{-\bar\ell_i(1-\alpha) x^2/4} \le (\delta/n)^{5/4} \le \delta/(24n) \label{Ji bd2} 
\end{align}
if $n$ is large enough. Combining \eqref{Kij bd}, \eqref{Ji bd1},  and \eqref{Ji bd2} with the union bound proves \eqref{ingrad}(i). 
 
 To prove \eqref{ingrad}(ii) (resp.~(iii)), let $\lambda=\sqrt{(1-\alpha)\log(n/\delta)/Td_{\min}}$ and $\cR$ be the region
\begin{align*}
    \cR := \Bigg\{\mathbf U &= (U_1, \ldots, U_n)\in\mathbb R^n : \\
    &|U_j-T(1-\alpha)|\le \frac{4-3\alpha}{1-\alpha}\sqrt{T(1-\alpha)\log(n/\delta)} \text{ for all } j \in [n], \text { and } \\ &\qquad\qquad\bigg|\sum_{j\in[n]}U_j p_{ij} -\bar\ell_i(1-\alpha)\bigg| \le \sqrt{3\bar\ell_i(1-\alpha)\log(n/\delta)}\Bigg\}
\end{align*}
and for $\mathbf{U} \in \cR$,
\[
    h(\mathbf{U}) := \sum_{j\in[n], k\in[U_j]} G_{ij}^{(k)}I_{ij}^{\big(S_{ij}^{(k-1)}\big)},
\]
where $(I_{ij}^{(k)}, k \ge 1)$ and $(G_{ij}^{(k)}, k \ge 1)$ are independent sequences of i.i.d.~$\mathrm{Bernoulli}(p_{ij})$ and $\mathrm{Geometric}(1-\alpha)$ random variables, respectively, and $S_{ij}^{(k)}=\sum_{m\in[k]}G_{ij}^{(m)}$. Since $\cR$ is a bounded set, we can find $\check{\mathbf U}\in\cR$ (resp. $\hat{\mathbf U}\in\cR$) that maximizes $\E e^{\lambda h(\mathbf U)}\mathbf 1_\cR$ (resp.~$\E e^{-\lambda h(\mathbf U)}\mathbf 1_\cR$). Combining this observation with the facts (a) $h(T_{i1}, \ldots, T_{in}) \pre \ell_i \pre h(T_{i1}+1, \ldots, T_{in}+1)$, (b) $\sum_{j\in[n]}\check U_jp_{ij} \le \bar\ell_i(1-\alpha)+\sqrt{3\bar\ell_i(1-\alpha)\log(n/\delta)}$, and (c) $\sum_{j\in[n]}\hat U_jp_{ij} \ge \bar\ell_i(1-\alpha)- \sqrt{3\bar\ell_i(1-\alpha)\log(n/\delta)}$, and recalling the functions $\psi(\cdot)$ and $\phi(\cdot)$  we see that
\begin{align}
  \E e^{\lambda \ell_i}\mathbf 1_{L_i^c}&\le \E e^{\lambda h(\check{\mathbf U})}\mathbf 1_\cR \le \E e^{\lambda h(\check{\mathbf U})} \le \E\exp\left(\lambda\sum_{j\in[n], k\in[\check{\mathbf U}]} I^{(k)}_{ij}G^{(k)}_{ij}\right)\notag\\
   &\le\prod_{j\in[n]}\left[(1-p_{ij})+p_{ij}\frac{1-\alpha}{e^{-\lambda}-\alpha}\right]^{\check U_j}\le \exp\left(\sum_{j\in[n]}\check U_jp_{ij}\frac{\psi(\lambda)}{1-\alpha}\right)\notag\\
   &\le \exp\left(\bar\ell_i\psi(\lambda)\left[1+\sqrt{\frac{3\log(n/\delta)}{\bar\ell_i(1-\alpha)}}\right]\right),\text{ and}\label{Ucheck bd}\\
   \E e^{-\lambda \ell_i}\mathbf 1_{L_i^c}&\le \E e^{-\lambda h(\hat{\mathbf U})}\mathbf 1_\cR \le \E e^{-\lambda h(\hat{\mathbf U})} \le \E\exp\left(\sum_{j\in[n], k\in[\hat{\mathbf U}]} I^{(k)}_{ij}G^{(k)}_{ij}\right) \notag \\
  &\le\prod_{j\in[n]}\left[(1-p_{ij})+p_{ij}\frac{1-\alpha}{e^{\lambda}-\alpha}\right]^{\hat U_j} \le \exp\left(-\sum_{j\in[n]}\hat U_jp_{ij}\frac{\phi(\lambda)}{1-\alpha}\right)\notag\\
  &\le \exp\left(-\bar\ell_i\phi(\lambda)\left[1-\sqrt{\frac{3\log(n/\delta)}{\bar\ell_i(1-\alpha)}}\right]\right)\label{Uhat bd}
\end{align} 
Next we note that $\lambda<(1-\alpha)/2$ if $\frac{Td_{\min}}{\log(n)}$ is large enough; so (see \eqref{psi'bd} for details)
\[
    \psi(\lambda)\le\lambda\psi'(\lambda) \le \lambda(1+6\lambda/(1-\alpha)),
\]
and hence
\begin{align*}
   \psi(\lambda)\left[1+\sqrt{\frac{3\log(n/\delta)}{\bar\ell_i(1-\alpha)}}\right] &\le \lambda\left(1+\frac{6\lambda}{1-\alpha}\right) 
   \left[1+\sqrt{\frac{3\log(n/\delta)}{\bar\ell_i(1-\alpha)}}\right] \\
  &\le \lambda\left( 1+\frac{12\lambda}{1-\alpha}+\sqrt{\frac{3\log(n/\delta)}{\bar\ell_i(1-\alpha)}}\right) \\
   &\le \lambda \left(1+\frac{12+\sqrt 3}{\sqrt{1-\alpha}}\sqrt{\frac{\log(n/\delta)}{Td_{\min}}}\right). 
\end{align*}
Using the above estimate, \eqref{Ucheck bd}, and Markov inequality, if $\kappa(\alpha)=\frac{16}{\sqrt{1-\alpha}}$, then
\begin{align*}
 & \mathbb P\left(\ell_i>\bar\ell_i+
 \kappa\bar\ell_i\sqrt{\log(n/\delta)/Td_{\min}}; L_i^c\right) 
  \le \frac{\E e^{\lambda \ell_i}\mathbf 1_{L_i^c}}{\exp(\lambda\bar\ell_i+\kappa\lambda\bar\ell_i\sqrt{\log(n/\delta)/Td_{\min}})} \\
&  \le \exp\left(\bar\ell_i\psi(\lambda)\left[1+\sqrt{\frac{3\log(n/\delta)}{\bar\ell_i(1-\alpha)}}\right]-\lambda\bar\ell_i-\kappa\bar\ell_i\lambda\sqrt{\log(n/\delta)/Td_{\min}}\right) \\
&  \le \exp\left(-\frac{2}{\sqrt{1-\alpha}}\lambda\bar\ell_i \sqrt{\frac{\log(n/\delta)}{Td_{\min}}}\right) \le (\delta/n)^2 \le \delta/(6n)
\end{align*}
if $n$ is large enough. This proves \eqref{ingrad}(ii). Next from \eqref{phi'bd}, we get 
\[
     \phi(\lambda)\ge\lambda\phi'(\lambda) \ge \lambda(1-2\lambda/(1-\alpha)),
\]
and hence
\begin{align*}
 \phi(\lambda)\bigg[1-\sqrt{\frac{3\log(n/\delta)}{\bar\ell_i(1-\alpha)}}\bigg] 
   &\ge \lambda\bigg(1-\frac{\lambda}{1-\alpha}\bigg)\bigg[1-\sqrt{\frac{3\log(n/\delta)}{\bar\ell_i(1-\alpha)}}\bigg]\\
   &\ge \lambda\bigg[1-\frac{\lambda}{1-\alpha}-\sqrt{\frac{3\log(n/\delta)}{\bar\ell_i(1-\alpha)}}\bigg] \ge \lambda\bigg[1-\frac{1+\sqrt 3}{\sqrt{1-\alpha}}\sqrt{\frac{\log(n/\delta)}{Td_{\min}}}\bigg].
\end{align*} 
Using the above estimate, \eqref{Uhat bd} and Markov inequality, 
\begin{align*}
 & \mathbb P\left(\ell_i<\bar\ell_i-\kappa\bar\ell_i\sqrt{\log(n/\delta)/Td_{\min}}; L_i^c\right) 
  \le \frac{\E e^{-\lambda \ell_i}\mathbf 1_{L_i^c}}{\exp(-\lambda\bar\ell_i+\kappa\lambda\bar\ell_i\sqrt{\log(n/\delta)/Td_{\min}})} \\
 & \le  \exp\left(-\bar\ell_i\phi(\lambda)\left[1-\sqrt{\frac{3\log(n/\delta)}{\bar\ell_i(1-\alpha)}}\right]+\lambda\bar\ell_i-\kappa\bar\ell_i\lambda\sqrt{\log(n/\delta)/Td_{\min}}\right) \\
&  \le \exp\left(-\frac{2}{\sqrt{1-\alpha}}\lambda\bar\ell_i \sqrt{\frac{\log(n/\delta)}{Td_{\min}}}\right) \le (\delta/n)^2 \le \delta/(6n)
\end{align*}
if $n$ is large enough. This proves \eqref{ingrad}(iii) and the proof of (e) is complete.
\end{proof}
\begin{proof}[Proof of Theorem~\ref{thm:comm}]
The proof has two parts. First, one uses the Davis-Kahan theorem to derive an upper bound on the misclustering error in terms of $\|A - \bar{A}\|$ (resp. $\|\mathcal{L} - \bar{\mathcal{L}}\|$) when the $(1 + \epsilon)$-approximate spectral clustering algorithm is applied on the aggregated adjacency matrix (resp. the aggregated Laplacian matrix). For example, using Lemmas 5.1 and 5.3 of \cite{lei2015consistency} (see also \cite{bhattacharyya2020consistentrecovery}), we see that
\begin{align*}
    \err(\hat{Z}_{\adj}, Z) \le \tilde{C} (2 + \epsilon)\frac{K \|A - \bar{A}\|^2}{\gamma_{\min}(\bar{A})^2}   
\end{align*}
for some absolute constant $\tilde{C} > 0$, where $\gamma_{\min}(\bar{A})$ is the smallest non-zero singular value of $\bar{A}$. A similar bound holds in case of $(1 + \epsilon)$-approximate spectral clustering on the aggregated Laplacian matrix:
\begin{align*}
    \err(\hat{Z}_{\lap}, Z) \le \tilde{C} (2 + \epsilon)\frac{K \|\mathcal{L} - \bar{\mathcal{L}}\|^2}{\gamma_{\min}(I_n - \bar{\mathcal{L}})^2},   
\end{align*}
where $\gamma_{\min}(I_n - \bar{\mathcal{L}})$ is the smallest non-zero singular value of $I_n - \bar{\mathcal{L}}$.

In the second step, we upper bound $\|A - \bar{A}\|$ or $\|\mathcal{L} - \bar{\mathcal{L}}\|$ using Theorem~\ref{thm:main}. Then the desired results follow from combining the two steps and noting that
\[
    \gamma_{\min}(\bar{A}) = T \gamma_{\adj}, \text{ and } \gamma_{\min}(I_n - \bar{\mathcal{L}}) = \gamma_{\lap},
\]
which follow from the fact that $\bar{A} = TP$.
\end{proof}

\begin{proof}[Proof of Theorem~\ref{thm:cpe}]
Let 
\[
    \bar{S}_{\xi}(t) := \bigg(\frac{t}{T}\bigg(1 - \frac{t}{T}\bigg)\bigg)^{\xi} \bigg\|\frac{1}{t}\sum_{s = 1}^t \E A^{(s)} - \frac{1}{T - t}\sum_{s = t + 1}^T \E A_s \bigg\|,
\]
It is easy to see that
\[
    \bar{S}_{\xi}(t) = \begin{cases}
        \frac{t^{\xi}}{(T - t)^{1 - \xi}} \frac{T - \tau}{T^{2\xi}} \|P - Q\| & \text{if } t \le \tau, \\
        \frac{(T - t)^{\xi}}{t^{1 - \xi}} \frac{\tau}{T^{2\xi}} \|P - Q\| & \text{if } t > \tau.
    \end{cases}
\]
It is easy to check that the function $t \mapsto \bar{S}_{\xi}(t)$ is strictly increasing up to $\tau$, beyond which it is strictly decreasing. Now the same argument as in the proof of Lemma 5.6.1 in \cite{mukherjee2018thesis}, yields that 
\[
    \P(|\hat{\tau}_{\cusum}^{(\Lambda, \xi)} - \tau| > \eta T) \le 2\P(\max_{t \in [\Lambda, T - \Lambda]}|S_{\xi}(t) - \bar{S}_{\xi}(t)| > \frac{\eta}{4} \bar{S}_{\xi}(\tau)).
\]
We note that
\[
    |S_{\xi}(t) - \bar{S}_{\xi}(t)| \le \bigg(\frac{t}{T}\bigg(1 - \frac{t}{T}\bigg)\bigg)^{\xi} \bigg\|\frac{1}{t}\sum_{s = 1}^t (A^{(s)} - \E A^{(s)}) - \frac{1}{T - t}\sum_{s = t + 1}^T (A^{(s)} - \E A^{(s)} \bigg\|.
\]
Following the proof of Lemma 5.6.2 in \cite{mukherjee2018thesis}, we see that
\[
    |S_{\xi}(t) - \bar{S}_{\xi}(t)| \le \bigg(\frac{t}{T}\bigg(1 - \frac{t}{T}\bigg)\bigg)^{\xi} \frac{2T}{\Lambda} \epsilon,
\]
with probability at least $1 - 3 \delta_{\Lambda}(\epsilon)$, provided we have a concentration inequality for lazy IER processes of the form
\[
    \P(\frac{1}{t}\|\sum_{i = 1}^t (A^{(s)} - \E A^{(s)})\| > \epsilon) \le \delta_{\Lambda}(\epsilon),
\]
for $t \ge \Lambda$. By Theorem~\ref{thm:main}, we have that
\[
    \P(\|\sum_{i = 1}^T (A^{(s)} - \E A^{(s)}\|) > \epsilon) \le n \exp\bigg(-\frac{\epsilon^2}{C_1^2 T d_{\max}}\bigg) = \delta(\epsilon),
\]
provided $Td_{\max} > C(\log n)^3$ and $\delta(\epsilon) \in (n^{-c}, \frac{1}{2})$. We assume that $\Lambda \min(d^P_{\max}, d^Q_{\max}) > C (\log n)^3$. Then for our lazy IER process with changepoint, we have a concentration inequality
\[
    \P(\frac{1}{t}\|\sum_{i = 1}^t (A^{(s)} - \E A^{(s)})\| > \epsilon) \le \delta_{\Lambda}(\epsilon) = n \exp\bigg(-\frac{\Lambda\epsilon^2}{C_1^2 \max(d^P_{\max}, d^Q_{\max})}\bigg) =: \delta_{\Lambda}(\epsilon),
\]
for all $t \ge \Lambda$, provided that $\delta_{\Lambda}(\epsilon) \in (n^{-c}, 1/2)$. Combining everything together, we conclude that
\[
    |\hat{\tau}_{\cusum}^{(\Lambda, \xi)} - \tau| \le \eta T
\]
with probability at least
\[
    1 - 6T\delta_{\Lambda}(\frac{\eta}{4}\bar{S}_{\xi}(\tau)) = 1 - 6Tn \exp\bigg(-\frac{\Lambda\eta^2\big(\frac{\tau(T - \tau)}{T^2}\big)^{2\xi}\|P - Q\|^2}{16C_1^2\max(d^P_{\max}, d^Q_{\max})}\bigg),
\]
where we have used the fact that $\bar{S}_{\xi}(\tau) = \big(\frac{\tau(T - \tau)}{T}\big)^{\xi}\|P - Q\|$. We conclude that for any $c' < c$ we have
\[
    |\hat{\tau}_{\cusum}^{(\Lambda, \xi)} - \tau| \le \frac{4 C_1 T}{\big(\frac{\tau(T - \tau)}{T}\big)^{\xi}\|P - Q\|} \sqrt{\frac{\max(d^P_{\max}, d^Q_{\max})}{\Lambda} (1 + c')\log n}
\]
with probability at least $1 - 6Tn^{-c'}$.
\end{proof}


\bibliographystyle{apalike}
\bibliography{bib.bib}

\end{document}